   \providecommand{\keywords}[1]{\textbf{\textit{Key words:}} #1}
 \numberwithin{equation}{section}
 \newtheorem{thm}{Theorem}[section]
 \newtheorem{lem}[thm]{Lemma}
 \newtheorem{define}[thm]{Definition}
 \newtheorem{cor}[thm]{Corollary}
 \newtheorem{prop}[thm]{Proposition}
 \newtheorem{rmk}[thm]{Remark}
 \newtheorem{ex}[thm]{Example}
\begin{document}
\title{\textbf {Homogeneous ACM bundles on isotropic Grassmannians}}

\author{Rong Du \thanks{School of Mathematical Sciences
Shanghai Key Laboratory of PMMP,
East China Normal University,
Rm. 312, Math. Bldg, No. 500, Dongchuan Road,
Shanghai, 200241, P. R. China,
rdu@math.ecnu.edu.cn.
},
Xinyi Fang
\thanks{School of Mathematical Sciences
Shanghai Key Laboratory of PMMP,
East China Normal University,
No. 500, Dongchuan Road,
Shanghai, 200241, P. R. China,
2315885681@qq.com.
}
and Peng Ren
\thanks{School of Mathematical Sciences
Shanghai Key Laboratory of PMMP,
East China Normal University,
No. 500, Dongchuan Road,
Shanghai, 200241, P. R. China,
ren194@126.com.
All of the authors are sponsored by Innovation Action Plan (Basic research projects) of Science and Technology Commission of Shanghai Municipality (Grant No. 21JC1401900) and Science and Technology Commission of Shanghai Municipality (Grant No. 18dz2271000).
}
}

\date{}
\maketitle


\begin{abstract}
In this paper, we characterize homogeneous arithmetically Cohen-Macaulay (ACM) bundles over isotropic Grassmannians of types $B$, $C$ and $D$ in term of step matrices. We show that there are only finitely many irreducible homogeneous ACM bundles by twisting line bundles over these isotropic Grassmannians. So we classify all homogeneous ACM bundles over isotropic Grassmannians combining the results on usual Grassmannians by Costa and Mir{\'o}-Roig. Moreover, if the irreducible initialized homogeneous ACM bundles correspond to some special highest weights, then they can be characterized by succinct forms.
\end{abstract}
\keywords{homogeneous ACM bundle, isotropic Grassmannian}

\section{Introduction}
Vector bundles over a projective variety $X$ are fundamental research objects in algebraic geometry.  However, little is known about vector bundles over general algebraic varieties. Many particular classes of vector bundles have been studied in recent years. One of them is called arithmetically Cohen-Macaulay (ACM) bundles which are defined by the vanishments of all intermediate cohomology groups $H^i(X, E(t))$ for $0<i<\text{dim} X$ and all $t\in \mathbb{Z}$. Such bundles correspond to maximal Cohen-Macaulay modules over the associated graded ring. These modules reflect lots properties of the corresponding ring, so ACM bundles reflect relevant properties of the algebraic variety.

It is well-known that Horrocks showed that a vector bundle without intermediate cohomology on the projective space splits (\cite{horrocks1964vector}\cite{okonek1980vector}). Since this result was established, the study of the indecomposable ACM bundles on a given variety has been drawn the attentions by many mathematicians. The problem of classifying ACM bundles has been taken up only in some special cases. There are many papers on ACM bundles over surfaces since two is the lowest non-trivial dimension of the varieties for ACM bundles (for example, see \cite{Ballico2021}, \cite{Casanellas2011}, \cite{Notari2017}, \cite{Faenzi2008}, \cite{Watanabe2008}, \cite{Yoshioka2021}). There has also been work on ACM bundles on particular higher dimensional varieties such as Fano $3$-folds (\cite{Brambilla2011}, \cite{Casnati2015}),  Calabi-Yau $3$-folds  (\cite{Filip2014}) and hypesurfaces (\cite{Ravindra2019}).  Recently, Costa and Mir{\'o}-Roig used the Bott-Borel-Weil theorem to classify the irreducible homogeneous ACM bundles on Grassmannians (\cite{costa2016homogeneous}), i.e. isotropic Grassmannians of type $A$.  The aim of this paper is to classify all irreducible homogeneous ACM bundles on isotropic Grassmannians $X=G/P(\alpha_k)$ of types $B$, $C$ and $D$.  Therefore we finish classifying all homogeneous ACM bundles over isotropic Grassmannians. We still use step matrices  combining the Borel-Bott-Weil theorem to characterize them, but the situation of these ACM bundles is more complicated than that of Grassmannians of type A. So we separate the step matrix of the irreducible homogeneous vector bundle $E$ into three parts according to the Killing forms of $\lambda+\rho-t\lambda_k$ with positive roots, where $\lambda$ is the highest weight of $E$, $\rho$ is the sum of all fundamental weights and $\lambda_k$ is the $k$-th fundamental weight. Furthermore, since the position of the $n$-th simple root of the Lie algebra of $G$ is not quite similar to other roots, we need to consider the case $k=n$ alone.
However, we can unify our results and have the main theorem finally.

\begin{thm}
Let $E_\lambda$ be an initialized irreducible homogeneous vector bundle with highest weight $\lambda$ over $G/P(\alpha_k)$ of type $B$, $C$ or $D$. Let $T_{k,\lambda}=(t_{ij})$ be its step matrix.
Denote $n_l:=\#\{t_{ij}|t_{ij}=l \}$. Then $E_\lambda$ is an ACM bundle if and only if $n_l\geq1$ for any integer $l\in [1,M_{k,\lambda}],$
where $M_{k,\lambda}= max\{t_{ij}\}$.
\end{thm}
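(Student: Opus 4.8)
The plan is to convert the ACM vanishing condition into a combinatorial statement about the step matrix by means of the Borel--Bott--Weil theorem, and then to read off the ``no gap'' criterion. First I would recall that for an irreducible homogeneous bundle the twist $E_\lambda(t)$ is again homogeneous, so its cohomology is controlled by the position of $\lambda+\rho-t\lambda_k$ relative to the walls of the Weyl chambers. By Borel--Bott--Weil, for each $t\in\mathbb{Z}$ exactly one alternative occurs: either $\lambda+\rho-t\lambda_k$ is singular (orthogonal to some positive root) and then $H^\bullet(X,E_\lambda(t))=0$ entirely, or it is regular and $H^i(X,E_\lambda(t))\neq0$ for the single index $i=i(t):=\#\{\beta>0:\langle\lambda+\rho-t\lambda_k,\beta\rangle<0\}$. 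Since $\langle\lambda_k,\beta\rangle=0$ for every positive root $\beta$ of the Levi factor, the pairing $\langle\lambda+\rho-t\lambda_k,\beta\rangle=\langle\lambda+\rho,\beta\rangle>0$ is independent of $t$ for those roots; hence only the roots of the nilradical (those with $\langle\lambda_k,\beta\rangle>0$) can change sign, and these are exactly the roots indexing the entries of $T_{k,\lambda}$. Consequently $i(t)$ is a non-decreasing step function running from $0$ (for $t\ll0$) to $N:=\dim X$ (for $t\gg0$), and ACM is equivalent to the requirement that $i(t)\in\{0,N\}$ at every integer $t$ where the weight is regular.

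Next I would make the link between the sign changes of $i(t)$ and the entries $t_{ij}$ explicit. For a nilradical root $\beta$ the pairing vanishes precisely at $t=\langle\lambda+\rho,\beta\rangle/\langle\lambda_k,\beta\rangle$, and I would verify that this critical twist is exactly the matrix entry $t_{ij}$ attached to $\beta$, with the initialization of $E_\lambda$ normalizing the entries to be positive integers whose minimum is $1$ and whose maximum is $M_{k,\lambda}$. Counting then gives $i(t)=\#\{(i,j):t_{ij}<t\}$ at each regular integer $t$, and both implications become short. If $n_l\geq1$ for every $l\in[1,M_{k,\lambda}]$, then every integer of $[1,M_{k,\lambda}]$ is a critical twist, hence a wall, so the weight is singular there; the only regular integers are $t\le0$, where $i(t)=0$ and at most $H^0$ survives, and $t\ge M_{k,\lambda}+1$, where $i(t)=N$ and at most $H^N$ survives, so no intermediate cohomology occurs. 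Conversely, a gap $n_{l_0}=0$ with $2\le l_0\le M_{k,\lambda}$ leaves $t=l_0$ regular, and since the value $1$ occurs (giving entries below $l_0$) and $M_{k,\lambda}>l_0$ (giving entries above $l_0$) we get $0<i(l_0)<N$, i.e. genuine intermediate cohomology, so $E_\lambda$ is not ACM.

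The delicate point, and the reason for splitting $T_{k,\lambda}$ into three blocks according to $\langle\lambda_k,\beta\rangle$ and the shape of $\beta$, is the arithmetic of these critical twists across the three types. In types $B$, $C$ and $D$ the nilradical contains roots of the forms $e_i-e_j$ and $e_i+e_j$ together with the short or long roots $e_i$, $2e_i$, and for the latter $\langle\lambda_k,\beta\rangle$ may equal $2$ rather than $1$; then $\langle\lambda+\rho,\beta\rangle/\langle\lambda_k,\beta\rangle$ can be a half-integer, so that $i(t)$ may jump strictly between two consecutive integers without any integer being a wall. I therefore expect the main obstacle to be proving that, once $T_{k,\lambda}$ is organized into the three blocks, these non-integral wall-crossings are always accompanied by integral ones in such a way that the integer-valued no-gap condition on the entries still captures ACM exactly; concretely one must rule out that a half-integer critical value isolates a regular integer carrying intermediate cohomology, which is where the block-by-block estimates on $\langle\lambda+\rho-t\lambda_k,\beta\rangle$ do the real work.

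Finally, because the last fundamental weight $\lambda_n$ pairs with the simple roots differently from $\lambda_k$ for $k<n$ (the spinor situation in types $B$ and $D$, and the Lagrangian situation in type $C$), I would treat $k=n$ as a separate computation, redoing the identification of the entries $t_{ij}$ with the wall positions and the degree count $i(t)$ in that case, and then check that the resulting criterion coincides with the uniform statement $n_l\geq1$ for all $l\in[1,M_{k,\lambda}]$, thereby assembling the two cases into the single theorem.
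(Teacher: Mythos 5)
Your proposal is correct and follows essentially the same route as the paper: Borel--Bott--Weil reduces the ACM condition to requiring that $\lambda+\rho-t\lambda_k$ be singular or regular of index $0$ or $\dim X$ for every integer $t$, and the computation of the Killing forms with the nilradical roots identifies the singular twists with the entries of $T_{k,\lambda}$ and the extreme indices with $t\le 0$ and $t>M_{k,\lambda}$, exactly as in the paper's case-by-case treatment of types $B$, $C$, $D$ and of $k=n$. The only remark worth adding is that the ``delicate point'' about half-integer critical values in your third paragraph is not a genuine obstacle: a half-integer entry simply never makes an integer twist singular, so both directions of the equivalence go through verbatim as you already argue in your second paragraph.
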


From the main theorem, we can get the following corollary.

\begin{cor}
There are only finitely many irreducible homogeneous ACM bundles up to tensoring a line bundle over $G/P(\alpha_k)$ of types $B$, $C$ and $D$. In particular, the moduli space of projective bundles produced by irreducible homogeneous ACM bundles consists of finite points.
\end{cor}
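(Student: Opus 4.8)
The plan is to derive everything from the main theorem together with the fact that $\mathrm{Pic}(G/P(\alpha_k))\cong\mathbb{Z}$, generated by the line bundle $\mathcal{O}(1)$ associated to the fundamental weight $\lambda_k$. Tensoring an irreducible homogeneous bundle by $\mathcal{O}(m)$ shifts its highest weight by $m\lambda_k$, so every class of irreducible homogeneous bundles modulo line-bundle twists contains exactly one initialized representative. Hence it suffices to prove that there are only finitely many initialized irreducible homogeneous ACM bundles $E_\lambda$ over $G/P(\alpha_k)$.

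The key observation is that the step matrix $T_{k,\lambda}=(t_{ij})$ has a fixed number of entries $D:=\dim G/P(\alpha_k)$, equal to the number of positive roots in the nilradical of the parabolic $\mathfrak{p}(\alpha_k)$; this count depends only on the type and on $k$, not on $\lambda$. By the main theorem, if $E_\lambda$ is ACM then every integer $l\in[1,M_{k,\lambda}]$ occurs as some $t_{ij}$, so that $\sum_{l=1}^{M_{k,\lambda}} n_l$ entries are accounted for with $n_l\geq 1$ for each such $l$. Since there are only $D$ entries in total, we obtain $M_{k,\lambda}\leq D$. Thus the largest entry of the step matrix is bounded uniformly, by a constant depending only on the type and on $k$.

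Next I would translate this bound on $M_{k,\lambda}$ into a bound on $\lambda$. Each entry $t_{ij}$ is an affine-linear function of the pairings $\langle\lambda+\rho,\beta\rangle$ for the positive roots $\beta$ attached to the nilradical, and these pairings, together with the normalization fixed by initialization, determine the coordinates of $\lambda$ in the basis of fundamental weights. Bounding every $t_{ij}$ by $M_{k,\lambda}\leq D$ therefore confines $\lambda$ to a bounded region of the dominant weight lattice, which contains only finitely many weights. Consequently there are only finitely many initialized irreducible homogeneous ACM bundles, proving the first assertion.

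For the statement about moduli, I would use that the projectivization is insensitive to twists: $\mathbb{P}(E_\lambda)\cong\mathbb{P}(E_\lambda\otimes\mathcal{O}(m))$ for all $m$. Hence the projective bundles arising from irreducible homogeneous ACM bundles are in bijection with the finite set of twist-classes found above, so there are finitely many of them. Since each irreducible homogeneous bundle is rigid, being determined by a single dominant weight and admitting no nontrivial homogeneous deformation, each corresponds to an isolated reduced point, and the resulting moduli space is a finite set of points. The main subtlety to check carefully is the last step of the finiteness argument: that boundedness of the full collection of step-matrix entries genuinely controls every coordinate of $\lambda$ rather than only a proper subset, which amounts to verifying that the nilradical roots, together with the initialization condition, pin down $\lambda$ completely.
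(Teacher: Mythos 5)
Your proposal is correct and follows essentially the same route as the paper: pass to an initialized representative, observe that the step matrix has exactly $\dim G/P(\alpha_k)$ (potentially nonzero) entries so that the ACM condition forces $M_{k,\lambda}\leq \dim G/P(\alpha_k)$, and then bound the highest weight. The subtlety you flag at the end is resolved immediately by the explicit formulas in Theorems \ref{thm<n} and \ref{thm=n}: $M_{k,\lambda}$ is itself an affine-linear function of the $a_u$ with strictly positive coefficient on every $a_u$ with $u\neq k$ (and $a_k=0$ by initialization), so the single bound $M_{k,\lambda}\leq\dim G/P(\alpha_k)$ already confines all coordinates of $\lambda$ to a finite set.
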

\paragraph{Plan of the paper} In Section 2, we introduce some theorems on rational homogeneous spaces, especially on the theory of irreducible homogeneous vector bundles. In Section 3, we show our main theorem on classifying the initialized irreducible homogeneous ACM bundles on isotropic Grassmannians of types $B$, $C$ and $D$. Especially, we show that there are only finitely many irreducible homogeneous ACM bundles by twisting line bundles over these isotropic Grassmannians. Moreover, if the irreducible initialized homogeneous ACM bundles correspond to special highest weights, then we present some simple criteria to characterize them.

 \paragraph{Notation and convention}

\begin{itemize}
\item $B_n$: the simple Lie group with Dynkin diagram $B_n$;
\item  $C_n$: the simple Lie group with Dynkin diagram $C_n$;
\item  $D_n$: the simple Lie group with Dynkin diagram $D_n$;
\item  $e_i$: orthonormal basis of the $\mathbb{R}$-vector space spanned by the vectors corresponding to the simple roots;
\item  $(a_1,\dots,a_n)$: $a_1e_1+\dots+a_ne_n$;
\item  $\Phi^+$: the set of positive roots;
\item  $\Phi^-$: the set of negative roots;
\item  $\lambda_k$: the $k$-th fundamental weight;
\item  $(\cdot,\cdot)$: the Killing form;
\item $E_\lambda$: the irreducible homogeneous vector bundle with highest weight $\lambda$;
\item $G/P(\alpha_k)$: the isotropic Grassmannian with semisimple complex Lie group $G$ and parabolic subgroup $P(\alpha_k)$;
\item $T_{k, \lambda}^{Z}$: the step matrix of $E_\lambda$ on the isotropic Grassmannian $G/P(\alpha_k)$, where G is of type $Z$.
\end{itemize}

\section{Preliminaries}
Throughout this paper, all algebraic varieties and morphisms will be defined over the field $\mathbb{C}$.

\subsection{Weights}
Let $G$ be a semisimple complex Lie group and $H$ be a fixed maximal torus of $G$. Denote their Lie algebras by
$\mathfrak{g}$ and $\mathfrak{h}$ respectively. Let $\Phi$ be its root system and $\Delta=\{\alpha_1,...,\alpha_n\}\subset\Phi$ be a set of fixed simple roots.

The \emph{weight lattice} $\Lambda$ of $G$ consists of the linear function $\lambda: \mathfrak{h}\to \mathbb{C}$ such that $\frac{2(\lambda,\alpha)}{(\alpha,\alpha)}\in\mathbb{Z}$ for all $\alpha\in\Phi$.   An element in $\Lambda$ is called a weight. A weight $\lambda\in\Lambda$ is said to be \emph{dominant} if $\frac{2(\lambda,\alpha)}{(\alpha,\alpha)}$ are non-negative for $\alpha\in\Delta$ and \emph{strongly dominant} if these integers are positive. We call $\lambda_i$ the \emph{fundamental dominant weights} if $\frac{2(\lambda_i,\alpha_j)}{(\alpha_j,\alpha_j)}=\delta_{ij}.$ From the definition, we can easily to see that a weight $\mu=\sum\limits_{u=1}^na_u\lambda_u $ is dominant if $a_u\geq 0$ and strongly dominant if $a_u>0$.

Let $V$ be a representation of $\mathfrak{g}.$ The weight lattice of $\Lambda(V)=\{\lambda\in\Lambda| h.v=\lambda(h)v \text{ for all }h\in\mathfrak{h}\}$. A weight $\lambda\in\Lambda(V)$ is called the \emph{highest weight} of $V$ if $\lambda+\alpha$ is not a weight in $\Lambda(V)$ for any $\alpha \in \Phi^+$.

\subsection{Rational homogeneous spaces}
 Let us introduce some concepts on rational homogeneous spaces.
\begin{define}
A closed subgroup $P$ of $G$ is called \emph{parabolic} if the quotient space $G/P$ is projective.
\end{define}

 Let $I\subset\Delta$ be a subset of simple roots. Define \[\Phi^-(I):=\{\alpha\in\Phi^-|\alpha=\sum\limits_{\alpha_{i}\notin I}p_i\alpha_i\}.\]
Let \[\mathcal{P}(I):=\mathfrak{h}\bigoplus (\oplus_{\alpha\in\Phi^+}\mathfrak{g}_\alpha)\bigoplus (\oplus_{\alpha\in\Phi^-(I)}\mathfrak{g}_\alpha)\]
and $P(I)$ be the subgroup of $G$ such that the Lie algebra of $P(I)$ is $\mathcal{P}(I).$ We have the following theorem to describe all parabolic subgroups of $G$.

\begin{thm}(see \cite{ottaviani1995rational} Theorem 7.8)
Let $G$ be a semisimple simply connected Lie group and $P$ be a parabolic subgroup of G. Then There exists $g\in G$ and $I\subset \Delta$ such that \[g^{-1}Pg=P(I).\]
\end{thm}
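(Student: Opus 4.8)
The plan is to prove the statement in two stages: first reduce, via a fixed-point argument, to the case where $P$ contains the standard Borel subgroup $B$ (the subgroup whose Lie algebra is $\mathfrak{b}=\mathfrak{h}\oplus(\oplus_{\alpha\in\Phi^+}\mathfrak{g}_\alpha)$), and then classify all closed subgroups containing $B$ by a Lie-algebra computation, identifying them with the standard parabolics $P(I)$.

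For the reduction, I would first show that a closed subgroup $P$ is parabolic (that is, $G/P$ is projective) if and only if $P$ contains a conjugate of $B$. If $B\subseteq P$, then the natural surjection $G/B\to G/P$ exhibits $G/P$ as the image of the projective flag variety $G/B$, hence complete; being also quasi-projective (every homogeneous space is, by Chevalley), $G/P$ is projective. Conversely, if $G/P$ is complete, then by Borel's fixed-point theorem the connected solvable group $B$ has a fixed point $gP\in G/P$, so $BgP=gP$ and therefore $g^{-1}Bg\subseteq P$. Since all Borel subgroups of $G$ are conjugate, after replacing $P$ by a suitable conjugate $g^{-1}Pg$ I may assume from now on that $B\subseteq P$.

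Next I would pass to Lie algebras. Since $H\subseteq B\subseteq P$, the Lie algebra $\mathfrak{p}=\mathrm{Lie}(P)$ is stable under the adjoint action of $H$, hence is a sum of root spaces together with $\mathfrak{h}$:
\[
\mathfrak{p}=\mathfrak{h}\oplus\Big(\bigoplus_{\alpha\in\Psi}\mathfrak{g}_\alpha\Big),\qquad \Phi^+\subseteq\Psi\subseteq\Phi .
\]
Set $I:=\{\alpha_i\in\Delta\mid -\alpha_i\notin\Psi\}$. The heart of the argument is to prove that $\Psi\cap\Phi^-=\Phi^-(I)$, i.e. that the negative roots appearing in $\mathfrak{p}$ are exactly those supported on the simple roots outside $I$. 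The inclusion $\Phi^-(I)\subseteq\Psi$ follows because every negative root supported on $\Delta\setminus I$ can be written as an iterated sum of the negative simple roots $-\alpha_i$ ($\alpha_i\notin I$) staying inside $\Phi^-$ at each step, and $\mathfrak{p}$, being a subalgebra, is closed under the brackets $[\mathfrak{g}_\beta,\mathfrak{g}_\gamma]=\mathfrak{g}_{\beta+\gamma}$ whenever $\beta+\gamma\in\Phi$. For the reverse inclusion, given $-\beta\in\Psi$ with $\beta\in\Phi^+$, I would bracket $\mathfrak{g}_{-\beta}$ successively with the positive simple root spaces $\mathfrak{g}_{\alpha_i}\subseteq\mathfrak{p}$: using $\mathfrak{sl}_2$ root-string theory this produces, after cancelling all simple components of $\beta$ except one, the space $\mathfrak{g}_{-\alpha_j}$ for each simple root $\alpha_j$ occurring in the support of $\beta$, forcing $\alpha_j\notin I$; hence $\beta$ is supported on $\Delta\setminus I$ and $-\beta\in\Phi^-(I)$. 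This shows $\mathfrak{p}=\mathcal{P}(I)$.

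Finally I would integrate back to the group level. Because $G$ is connected and $P$ is a closed subgroup containing the Borel $B$, it is connected, and a connected closed subgroup of the simply connected group $G$ is determined by its Lie algebra; since $\mathfrak{p}=\mathcal{P}(I)=\mathrm{Lie}(P(I))$, we conclude $P=P(I)$, and undoing the initial conjugation gives $g^{-1}Pg=P(I)$. I expect the main obstacle to be the reverse inclusion $\Psi\cap\Phi^-\subseteq\Phi^-(I)$: controlling exactly which negative root spaces are forced into $\mathfrak{p}$ by the subalgebra condition requires a careful root-string analysis, since naive closure under addition alone is not enough to rule out a negative root whose support meets $I$.
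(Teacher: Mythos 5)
The paper itself gives no proof of this statement: it is quoted directly from Ottaviani's survey (Theorem 7.8 there) and used as a black box, so there is no in-paper argument to compare against. Your proposal is the standard proof of the classification of parabolic subgroups, and is essentially the argument in the cited source: Borel's fixed-point theorem (plus completeness of the image of $G/B$) shows that parabolic subgroups are exactly those containing a conjugate of the Borel subgroup $B$; $H$-stability of $\mathfrak{p}$ gives the decomposition $\mathfrak{p}=\mathfrak{h}\oplus\bigl(\oplus_{\alpha\in\Psi}\mathfrak{g}_\alpha\bigr)$ with $\Phi^+\subseteq\Psi$; and the closure property of $\Psi$ identifies $\Psi\cap\Phi^-$ with $\Phi^-(I)$ for $I=\{\alpha_i\in\Delta\mid -\alpha_i\notin\Psi\}$.

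One imprecision in the step you yourself flag as the crux: bracketing $\mathfrak{g}_{-\beta}$ only with the \emph{simple} root spaces $\mathfrak{g}_{\alpha_i}$ walks $-\beta$ down a chain $-\beta=-\beta_0,-\beta_1,\dots,-\beta_m=-\alpha$ (where $\beta_{j+1}=\beta_j-\alpha_{i_j}\in\Phi^+$), and thus produces $\mathfrak{g}_{-\alpha}$ for only \emph{one} simple root $\alpha\in\mathrm{supp}(\beta)$, not for every $\alpha_j$ in the support as you claim. The fix is cheap, because all positive root spaces lie in $\mathfrak{p}$, not just the simple ones: at each cancellation step one also has $\beta_{j+1}\in\Phi^+\subseteq\Psi$, so $[\mathfrak{g}_{\beta_{j+1}},\mathfrak{g}_{-\beta_j}]=\mathfrak{g}_{-\alpha_{i_j}}\subseteq\mathfrak{p}$, recovering the negative of each cancelled simple root; since every simple root in $\mathrm{supp}(\beta)$ occurs among $\{\alpha_{i_0},\dots,\alpha_{i_{m-1}},\beta_m\}$, this gives $\mathrm{supp}(\beta)\cap I=\emptyset$, i.e. $-\beta\in\Phi^-(I)$. (Equivalently, run induction on the height of $\beta$.) With that adjustment, and granting the standard facts you invoke — conjugacy of Borel subgroups, connectedness of closed subgroups containing a Borel, and the fact that a connected closed subgroup is determined by its Lie algebra — your outline is a complete and correct proof.
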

From this classification theorem, we always use $P(I)$ to denote the parabolic subgroup of $G$.

\begin{define}
A \emph{rational homogeneous space} $X$ is a variety with the form \[G/P\simeq G_1/P(I_1)\times G_2/P(I_2)\times \cdots \times G_m/P(I_m),\]
where every $P(I_i)$ is a parabolic subgroup of the simple Lie group $G_i$. Every rational homogeneous space $G_i/P(I_i)$ is called the generalized flag manifold.
\end{define}

  In this paper, we focus on the \emph{isotropic Grassmannian} $G/P(I)$ which is a generalized flag manifold with $\#|I|=1.$
\subsection{Homogeneous vector bundles}
Now we want to introduce an important class of vector bundles on the rational homogeneous space $G/P$.

\begin{define}
Over $G/P$, a vector bundle $E$ is called \emph{homogeneous} if there exists an action $G$ over $E$ such that the following diagram commutes

\centerline{
    \xymatrix{   G\times E \ar[r]\ar[d]& E\ar[d]\\
    G\times G/P \ar[r] & G/P. }}
\end{define}
\begin{rmk}
\emph{\begin{enumerate}
\item[1.] A homogeneous vector bundle over $G/P$ can be represented by $G\times_PV_\nu$, where $\nu:P\to V_\nu$ is a represetation of $P.$
\item[2.] If a representation $\nu: P\to V_\nu$ is irreducible, then we call $E$ an \emph{irreducible homogeneous vector bundle}.
\end{enumerate}}
\end{rmk}

Generally, homogeneous vector bundles over $G/P$ can be classified by the filtration of the irreducible homogeneous vector bundles. Hence we only consider the irreducible homogeneous vector bundles. We first introduce the classification of the irreducible representations of parabolic subgroups.

 \begin{prop}(See \cite[Proposition 10.9]{ottaviani1995rational})
Let $I=\{\alpha_1,\dots,\alpha_k\}$ be a subset of simple roots. Let $\lambda_1,\dots,\lambda_k$ be the corresponding fundamental weights. Then all the irreducible representations of $P(I)$ are $$V\otimes L^{n_1}_{\lambda_1}\otimes\dots\otimes L^{n_k}_{\lambda_k},$$
where $V$ is a representation of $S_P$ (the semisimple part of $P$), $n_i\in\mathbb{Z}$ and $L_{\lambda_i}$ is a one-dimensional representation with weight $\lambda_i$.
\end{prop}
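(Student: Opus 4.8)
The plan is to use the Levi decomposition of the parabolic subgroup and thereby reduce the problem to the representation theory of reductive groups. First I would write $P=P(I)$ in its Levi form $P=L\ltimes R_u(P)$, where $R_u(P)$ is the unipotent radical and $L$ is a reductive Levi factor. From the description $\mathcal{P}(I)=\mathfrak{h}\oplus(\oplus_{\alpha\in\Phi^+}\mathfrak{g}_\alpha)\oplus(\oplus_{\alpha\in\Phi^-(I)}\mathfrak{g}_\alpha)$, the Lie algebra of $L$ is spanned by $\mathfrak{h}$ together with those root spaces $\mathfrak{g}_{\pm\alpha}$ for which both $\mathfrak{g}_{\alpha}$ and $\mathfrak{g}_{-\alpha}$ lie in $\mathcal{P}(I)$, namely the $\alpha$ that are combinations of the simple roots in $\Delta\setminus I$. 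Hence the semisimple part $S_P=[L,L]$ has root system generated by $\Delta\setminus I$, while the unipotent radical $R_u(P)$ collects the remaining positive root spaces.

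Second, I would show that $R_u(P)$ acts trivially on any irreducible representation $W$ of $P$. Since $R_u(P)$ is unipotent, it fixes a nonzero vector of $W$, so the fixed subspace $W^{R_u(P)}$ is nonzero; because $R_u(P)$ is normal in $P$, this subspace is $P$-stable, and irreducibility forces $W^{R_u(P)}=W$. Consequently every irreducible representation of $P$ factors through the reductive quotient $L\cong P/R_u(P)$, so it suffices to classify irreducible representations of $L$.

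Third, I would exploit the structure $L=S_P\cdot Z(L)^{\circ}$, where $S_P$ is semisimple and $Z(L)^{\circ}$ is a central torus of rank $\#|I|=k$. By Schur's lemma the central torus acts on an irreducible $L$-module by a character, so each such module decomposes as $V\otimes\chi$ with $V$ an irreducible $S_P$-module and $\chi$ a one-dimensional representation. The key point is to identify which $\chi$ occur: a one-dimensional representation of $P$ corresponds to a weight $\mu$ that restricts trivially to $S_P$, i.e. $(\mu,\alpha_j)=0$ for every $\alpha_j\in\Delta\setminus I$. Using the defining relation $\frac{2(\lambda_i,\alpha_j)}{(\alpha_j,\alpha_j)}=\delta_{ij}$, such $\mu$ are exactly the integer combinations $\sum_{i\in I}n_i\lambda_i$, that is $\chi=L^{n_1}_{\lambda_1}\otimes\cdots\otimes L^{n_k}_{\lambda_k}$ with $n_i\in\mathbb{Z}$. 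Combining the three steps gives every irreducible representation of $P(I)$ in the asserted form $V\otimes L^{n_1}_{\lambda_1}\otimes\cdots\otimes L^{n_k}_{\lambda_k}$.

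The main obstacle I anticipate is the third step: pinning down precisely which characters of the central torus of the Levi factor occur, and matching them with the fundamental weights $\lambda_i$ indexed by $I$. Because $S_P\cap Z(L)^{\circ}$ is only finite, not every pair $(V,\chi)$ integrates automatically to a genuine representation, so one must verify the integrality and compatibility between characters of $L$ and weights of the whole group $G$. The orthogonality relations satisfied by the fundamental weights make this identification transparent once the character lattice of $Z(L)^{\circ}$ is described, which is why I expect the remaining verification to be routine.
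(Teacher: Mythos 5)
Your argument is correct, and since the paper offers no proof of this proposition (it is quoted verbatim from Ottaviani's survey, Proposition 10.9), the relevant comparison is with the standard argument there, which is exactly the route you take: Levi decomposition $P=L\ltimes R_u(P)$, triviality of the unipotent radical on irreducibles via Kolchin's fixed-vector theorem plus normality, and Schur's lemma on the central torus with the characters identified as $\sum_{i\in I}n_i\lambda_i$ (the integrality and lifting issues you flag are indeed routine because $G$ is assumed simply connected). No gap to report.
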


Notice that the weight lattice of $S_P$ can be embedded in the weight lattice of $G$. If $\lambda$ is the highest weight of an irreducible representation $V$ of $S_P$, then $\lambda+\sum\limits_{u=1}^kn_u\lambda_u$ is the highest weight of an irreducible representation of $V\otimes L^{n_1}_{\lambda_1}\otimes\dots\otimes L^{n_k}_{\lambda_k}$.

\begin{rmk}\label{dominant}
\emph{\begin{enumerate}
\item[1. ]In this paper, we denote $E_\lambda$ by the homogeneous bundle arising from the irreducible representation of $P$ with highest weight $\lambda.$
\item [2.] The irreducible representation of semisimple Lie group is determined by its highest weight.
Hence if  $E_\lambda$ is an irreducible homogeneous vector bundle over $G/P(I)$ with $\lambda=\sum a_u\lambda_i$, then $a_u\geq0$ for $\alpha_u\notin I.$
\end{enumerate}}
\end{rmk}

\subsection{Borel-Bott-Weil Theorem}
The Borel-Bott-Weil theorem is a powerful tool to compute the sheaf cohomology groups of irreducible homogeneous bundles. In order to present this theorem, we firstly introduce the following definition which can be found in Chapter 11 of Ottaviani's nice survey paper\cite{ottaviani1995rational}.
\begin{define}
Let $\lambda$ be a weight of a representation.
\begin{enumerate}
\item[1)]$\lambda$ is called \emph{singular} if there is $\alpha\in \Phi^+ $ such that $(\alpha,\lambda)=0.$
\item[2)] $\lambda$ is called \emph{regular of index $p$} if it is not singular and if there exactly $p$ roots $\alpha_1,\dots,\alpha_p\in\Phi^+ $ such that $(\lambda,\alpha_i)<0.$
\end{enumerate}
\end{define}

Now we can introduce the Borel-Bott-Weil theorem.

\begin{thm}[Borel-Bott-Weil, see \cite{ottaviani1995rational}]\label{borel bott weil} Let $E_\lambda$ be an irreducible homogeneous vector bundle over $G/P.$
\begin{enumerate}
\item[1)] If $\lambda+\rho$ is singular, then $$H^i(G/P,E_\lambda)=0, \forall i\in\mathbb{Z}.$$
\item[2)] If $\lambda+\rho$ is regular of index p, then $$H^i(G/P,E_\lambda)=0, \forall i\neq p,$$
 and $$H^p(G/P,E_\lambda)=G_{w(\lambda+\rho)-\rho},$$ where $\rho=\sum\limits_{i=1}^n\lambda_i$ and $w(\lambda+\rho)$ is the unique element of the fundamental Weyl chamber of G which is congruent to $\lambda+\rho$ under the action of the Weyl group.
 \end{enumerate}
\end{thm}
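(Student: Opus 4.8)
The plan is to reduce the statement to the cohomology of line bundles on the full flag variety $G/B$ and then to prove that case by an induction driven by the cohomology of $\mathcal{O}(d)$ on $\mathbb{P}^1$; throughout I write $\langle\mu,\alpha^\vee\rangle:=\tfrac{2(\mu,\alpha)}{(\alpha,\alpha)}$ and $s_i\cdot\mu:=s_i(\mu+\rho)-\rho$ for the dot action of the simple reflection $s_i$. For the reduction, let $p\colon G/B\to G/P$ be the natural projection, whose fibre is the flag variety of the Levi factor of $P$, and let $L_\lambda$ be the line bundle on $G/B$ attached to $\lambda$. By Remark \ref{dominant} the weight $\lambda$ is dominant for the semisimple part $S_P$, so along each fibre $L_\lambda$ has cohomology only in degree $0$, where it recovers the irreducible $S_P$-representation defining $E_\lambda$; hence $p_\ast L_\lambda=E_\lambda$ and $R^{i}p_\ast L_\lambda=0$ for $i>0$. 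The Leray spectral sequence degenerates and yields $H^i(G/P,E_\lambda)\cong H^i(G/B,L_\lambda)$ for every $i$, so it suffices to treat line bundles on $G/B$.

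\emph{The inductive engine.} For a simple root $\alpha_i$ let $\pi_i\colon G/B\to G/P_i$ be the $\mathbb{P}^1$-bundle given by the minimal parabolic $P_i$. On a fibre $L_\lambda$ restricts to $\mathcal{O}(d_i)$ with $d_i=\langle\lambda,\alpha_i^\vee\rangle$, so $H^\bullet(\mathbb{P}^1,\mathcal{O}(d_i))$ produces three regimes. When $d_i=-1$ the relative cohomology vanishes, giving $R^\bullet\pi_{i\ast}L_\lambda=0$ and hence $H^\bullet(G/B,L_\lambda)=0$. When $d_i\le -2$ one has $R^0\pi_{i\ast}L_\lambda=0$ while $R^1\pi_{i\ast}L_\lambda$ is a line bundle on $G/P_i$ whose pullback along $\pi_i$ is $L_{s_i\cdot\lambda}$; feeding this into the Leray sequence gives the degree-shift isomorphism
\[ H^{j}(G/B,L_\lambda)\;\cong\;H^{j-1}(G/B,L_{s_i\cdot\lambda}) \qquad (j\ge 1). \]
When $d_i\ge 0$ the bundle has sections along fibres and $R^1\pi_{i\ast}L_\lambda=0$, which underlies the Borel--Weil part.

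\emph{Assembling the induction.} I would induct on the index $p$, i.e.\ on the length $\ell(w)$ of the Weyl element $w$ carrying $\lambda+\rho$ into the dominant chamber. If $\lambda+\rho$ is dominant ($p=0$, $w=\mathrm{id}$), then $\lambda$ is dominant and Borel--Weil gives $H^0(G/B,L_\lambda)=G_\lambda$ with higher cohomology zero. If $\lambda+\rho$ is regular but not dominant, choose a simple $\alpha_i$ with $(\lambda+\rho,\alpha_i)<0$; then $d_i\le -2$, the degree-shift isomorphism applies, and $s_i\cdot\lambda$ has index $p-1$ with associated Weyl element $ws_i$, so the inductive hypothesis places its cohomology in degree $p-1$ equal to $G_{w(\lambda+\rho)-\rho}$ (because $(ws_i)\,s_i(\lambda+\rho)=w(\lambda+\rho)$); shifting up by one recovers the claim in degree $p$. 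If $\lambda+\rho$ is singular, the same reflections move $\lambda+\rho$ toward the dominant chamber until a simple root orthogonal to $\lambda+\rho$ appears (the weight lies on a wall), i.e.\ a step with $d_i=-1$; the total vanishing there is transported back to $\lambda$ by the degree-shift isomorphisms, proving $H^\bullet(G/B,L_\lambda)=0$.

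\emph{Main obstacle.} The delicate point is the Weyl-combinatorial bookkeeping: verifying that applying $s_i$ when $(\lambda+\rho,\alpha_i)<0$ lowers the index by exactly one (equivalently $\ell(ws_i)=\ell(w)-1$), so that the induction is well-founded and terminates precisely in the dominant chamber, and, in the singular case, that one is forced to meet a simple root orthogonal to $\lambda+\rho$. Equally, the clean identification of $R^1\pi_{i\ast}L_\lambda$ with the pullback of $L_{s_i\cdot\lambda}$ and the vanishing of higher direct images in the $G/B\to G/P$ reduction rest on the $SL_2$-representation theory along the $\mathbb{P}^1$-fibres, which must be tracked equivariantly to pin down the highest weight $w(\lambda+\rho)-\rho$.
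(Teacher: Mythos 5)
The paper does not prove this statement: it is quoted as background (Theorem \ref{borel bott weil}) with a citation to Ottaviani's survey, so there is no internal proof to compare against. Your sketch is the standard Demazure-style argument (reduce to line bundles on $G/B$ via the degeneration of the Leray spectral sequence for $G/B\to G/P$, then induct on the length of the Weyl element using the $\mathbb{P}^1$-bundles $G/B\to G/P_i$), which is essentially the proof given in the cited survey, so the overall strategy is sound. One genuine imprecision: when $d_i\le -2$ you describe $R^1\pi_{i\ast}L_\lambda$ as ``a line bundle on $G/P_i$ whose pullback along $\pi_i$ is $L_{s_i\cdot\lambda}$.'' This is only correct for $d_i=-2$; in general $R^1\pi_{i\ast}L_\lambda$ is a vector bundle of rank $-d_i-1$ on $G/P_i$, and $L_{s_i\cdot\lambda}$ is not pulled back from $G/P_i$ since $\langle s_i\cdot\lambda,\alpha_i^\vee\rangle=-d_i-2\ge 0$. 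The correct statement, obtained from relative Serre duality along the fibres (with $\omega_{\pi_i}=L_{-\alpha_i}$), is the equivariant isomorphism $R^1\pi_{i\ast}L_\lambda\cong \pi_{i\ast}L_{s_i\cdot\lambda}$, which still yields your degree-shift isomorphism $H^j(G/B,L_\lambda)\cong H^{j-1}(G/B,L_{s_i\cdot\lambda})$. The remaining gaps you flag yourself (that $\ell(ws_i)=\ell(w)-1$ precisely when $(\lambda+\rho,\alpha_i)<0$, and that a singular weight must meet a wall of a simple root along the reflection chain) are real but standard Weyl-group combinatorics; with those filled in and the $R^1$ identification corrected, the argument is complete.
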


\section{Classification of irreducible homogeneous ACM bundles on isotropic Grassmannians}
\subsection{ACM bundles}
We first introduce ACM bundles on a projective algebraic variety.

\begin{define}
Let $X\subseteq\mathbb{P}^N$ be a projective variety and $\mathcal{O}_X(1):=\mathcal{O}_{\mathbb{P}^N}(1)|_X$. A vector bundle $E$ over $X$ is called \emph{arithmetically Cohen Macauley} (\emph{ACM} for short) if $$H^i(X,E(t))=0, ~\text{where}~E(t)=E\otimes\mathcal{O}_X(t), ~ \text{for all}~i=1,2,\dots,\dim X-1 ~\text{and}~ t \in \mathbb{Z}.$$
\end{define}

Generally, it is hard to classify all ACM bundles on a variety. In this paper, we shall classify the irreducible homogeneous ACM bundles on isotropic Grassmannians of types $B$, $C$ and $D$. It is easy to see that $E$ is an ACM bundle if and only if $E(t)$ is an ACM bundle.
So for simplicity, we introduce the following definition.

\begin{define}
Given a projective variety $(X,\mathcal{O}_X(1))$, a vector bundle ${E}$ on X is called \emph{initialized} if $$H^0(X, {E}(-1))=0$$ and $$H^0(X, {E})\neq0.$$
\end{define}

For an irreducible homogeneous vector bundle over an isotropic Grassmannian, we have the following lemma.
 \begin{lem}\label{initial} Let $X=G/P(\alpha_k)$ be an isotropic Grassmannian.
If $E_\lambda$ is initialized with $\lambda=a_1\lambda_1+\dots+a_n\lambda_n$, then $a_{k}=0.$
\end{lem}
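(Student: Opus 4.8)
The plan is to convert the two conditions in the definition of ``initialized'' into statements about Killing forms through the Borel-Bott-Weil theorem (Theorem \ref{borel bott weil}), and then read off the $k$-th coordinate $a_k$. The first thing to record is that on $X=G/P(\alpha_k)$ the ample generator $\mathcal{O}_X(1)$ is the line bundle of weight $\lambda_k$, so that $E_\lambda(t)=E_{\lambda+t\lambda_k}$; in particular $E_\lambda(-1)=E_{\lambda-\lambda_k}$, which is again an irreducible homogeneous bundle, since subtracting $\lambda_k$ only alters the $k$-th coefficient (the one attached to $\alpha_k\in I$, on which no sign constraint is imposed), leaving $a_u\geq 0$ for $u\neq k$ intact by Remark \ref{dominant}. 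By Borel-Bott-Weil, for any weight $\mu$ one has $H^0(X,E_\mu)\neq 0$ precisely when $\mu+\rho$ is regular of index $0$, i.e. $(\mu+\rho,\alpha)>0$ for every $\alpha\in\Phi^+$, and $H^0(X,E_\mu)=0$ precisely when $(\mu+\rho,\alpha)\leq 0$ for some $\alpha\in\Phi^+$.

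It is convenient to compute these pairings in simple-root coordinates. Writing a positive root as $\alpha=\sum_i c_i(\alpha)\alpha_i$ with $c_i(\alpha)\geq 0$, the defining relation $\frac{2(\lambda_u,\alpha_i)}{(\alpha_i,\alpha_i)}=\delta_{ui}$ gives $(\lambda_u,\alpha)=c_u(\alpha)\frac{(\alpha_u,\alpha_u)}{2}$, so that for $\mu=\sum_u a_u\lambda_u$ one obtains $(\mu+\rho,\alpha)=\sum_u(a_u+1)\,c_u(\alpha)\frac{(\alpha_u,\alpha_u)}{2}$, a non-negative combination whenever every $a_u+1>0$.

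For the lower bound I would apply the non-vanishing $H^0(X,E_\lambda)\neq 0$ to the single positive root $\alpha=\alpha_k$: this yields $(\lambda+\rho,\alpha_k)=(a_k+1)\frac{(\alpha_k,\alpha_k)}{2}>0$, hence $a_k\geq 0$. For the upper bound I would argue by contradiction, assuming $a_k\geq 1$ and deducing $H^0(X,E_{\lambda-\lambda_k})\neq 0$, which contradicts $H^0(X,E_\lambda(-1))=0$. Indeed, from the displayed formula, $(\lambda-\lambda_k+\rho,\alpha)=\sum_{u\neq k}(a_u+1)\,c_u(\alpha)\frac{(\alpha_u,\alpha_u)}{2}+a_k\,c_k(\alpha)\frac{(\alpha_k,\alpha_k)}{2}$, and every term is non-negative once $a_u\geq 0$ for $u\neq k$ and $a_k\geq 1$; the total vanishes only if $c_u(\alpha)=0$ for all $u$, i.e. $\alpha=0$, impossible for a positive root. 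Thus $(\lambda-\lambda_k+\rho,\alpha)>0$ for all $\alpha\in\Phi^+$, forcing the unwanted non-vanishing. Combining the two bounds gives $a_k=0$.

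The only structural inputs are that $\mathcal{O}_X(1)$ corresponds to $\lambda_k$ and that $a_u\geq 0$ for $u\neq k$, both available from the Picard group of the isotropic Grassmannian and from Remark \ref{dominant}. The point to get exactly right is the translation of ``index $0$'' in Borel-Bott-Weil into strict positivity $(\mu+\rho,\alpha)>0$ for all positive roots (non-singularity being automatically subsumed), since the whole argument is a comparison of this condition for $\lambda$ against $\lambda-\lambda_k$. I expect this comparison step to be the only delicate one; notably, no fact specific to the root lengths of $B$, $C$ or $D$ is used beyond $(\alpha_u,\alpha_u)>0$, so the argument applies uniformly across the three types.
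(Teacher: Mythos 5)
Your proposal is correct and follows essentially the same route as the paper: both translate $H^0(X,E_\lambda)\neq 0$ and $H^0(X,E_\lambda(-1))=0$ via Borel--Bott--Weil into the statement that $\lambda+\rho$ is strongly dominant while $\lambda+\rho-\lambda_k$ is not, and then use $a_u\geq 0$ for $u\neq k$ to isolate the $k$-th coefficient. Your expansion of the pairings over all positive roots is just a more explicit rendering of the paper's one-line observation that strong dominance is equivalent to positivity of the fundamental-weight coefficients.
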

\begin{proof}
By the Borel-Bott-Weil theorem (Theorem \ref{borel bott weil}), $H^0(X,E_\lambda)\neq0$ is equivalent to $\lambda+\rho$ being regular of index 0, which means that $\lambda+\rho$ is strongly dominant. So $a_i+1>0.$ Meanwhile, $H^0(X,E_\lambda(-1))=0$ shows that $\lambda+\rho-\lambda_k$ is not strongly dominant, which means $a_k+1-1\leq0$. Hence $a_k=0.$
\end{proof}

\subsection{ACM bundles on $G/P(\alpha_k)$ for $k\neq n$}

Let $G$ be a simply connected simple Lie group with the Dynkin diagram of types $A_n$, $B_n$, $C_n$ or $D_n$ as follows.
\setlength{\unitlength}{0.4mm}
\begin{center}
\begin{picture}(280,0)(0,120)
\put(10,100){\circle{4}} \put(30,100){\circle{4}}
\put(60,100){\circle{4}} \put(80,100){\circle{4}}
\put(12,100){\line(1,0){16}} \put(40,100){\circle*{2}}
\put(45,100){\circle*{2}} \put(50,100){\circle*{2}}
 \put(62,100){\line(1,0){16}}
 \put(100,100){\circle{4}}
 \put(82,100){\line(1,0){16}}
\put(-10,100){\makebox(0,0)[cc]{$A_n:$}}
\put(7,110){$_1$}
\put(27,110){$_2$}
\put(51,110){$_{n-2}$}
\put(71,110){$_{n-1}$}
\put(97,110){$_{n}$}

\put(210,100){\circle{4}} \put(230,100){\circle{4}}
\put(260,100){\circle{4}} \put(280,100){\circle{4}}
\put(212,100){\line(1,0){16}} \put(240,100){\circle*{2}}
\put(245,100){\circle*{2}} \put(250,100){\circle*{2}}
 \put(262,100){\line(1,0){16}}
 \put(300,100){\circle{4}}
 \put(281,102){\line(1,0){18}}
 \put(281,98){\line(1,0){18}}
 \put(285,103){\line(3,-1){9}}
 \put(285,97){\line(3,1){9}}
\put(190,100){\makebox(0,0)[cc]{$B_n:$}}
\put(207,110){$_1$}
\put(227,110){$_2$}
\put(251,110){$_{n-2}$}
\put(271,110){$_{n-1}$}
\put(297,110){$_{n}$}
 \end{picture}
\end{center}
\vspace{.3cm}

\begin{center}
\begin{picture}(280,20)(0,120)
\put(10,100){\circle{4}} \put(12,100){\line(1,0){16}}
\put(30,100){\circle{4}} \put(40,100){\circle*{2}}
\put(45,100){\circle*{2}} \put(50,100){\circle*{2}}
 \put(60,100){\circle{4}}
\put(62,100){\line(1,0){16}} \put(80,100){\circle{4}}
\put(82,100){\line(3,1){16}} \put(100,94){\circle{4}} \put(82,100){\line(3,-1){16}} \put(100,106){\circle{4}}
\put(-10,100){\makebox(0,0)[cc]{$D_n:$}}
\put(7,110){$_1$}
\put(27,110){$_2$}
\put(51,110){$_{n-3}$}
\put(71,110){$_{n-2}$}
\put(105,106){$_{n-1}$}
\put(105,94){$_{n}$}

\put(210,100){\circle{4}} \put(230,100){\circle{4}}
\put(260,100){\circle{4}} \put(280,100){\circle{4}}
\put(212,100){\line(1,0){16}} \put(240,100){\circle*{2}}
\put(245,100){\circle*{2}} \put(250,100){\circle*{2}}
 \put(262,100){\line(1,0){16}}
 \put(300,100){\circle{4}}
 \put(281,102){\line(1,0){18}}
 \put(281,98){\line(1,0){18}}
 \put(285,100){\line(3,-1){9}}
 \put(285,100){\line(3,1){9}}
\put(190,100){\makebox(0,0)[cc]{$C_n:$}}
\put(207,110){$_1$}
\put(227,110){$_2$}
\put(251,110){$_{n-2}$}
\put(271,110){$_{n-1}$}
\put(297,110){$_{n}$}
 \end{picture}
\end{center}
\vspace{1cm}

One can see that the positions of the $n$-th points in the graphs of types $B_n$, $C_n$ and $D_n$ are obviously different from that of type $A_n$, which causes the complexity of isotropic Grassmannians of types $B_n$, $C_n$ and $D_n$. In this section, we shall classify irreducible homogeneous ACM bundles over isotropic Grassmannians $G/P(\alpha_k)$ of types $B$, $C$ and $D$.
 We focus on $k<n$ for types $B$ and $C$, and on $k<n-1$ for type $D$. The reason that we do not consider $k=n-1$ for type $D$ is  $D_n/P(\alpha_{n-1})\simeq D_n/P(\alpha_{n})$.

We give the explicit forms of the positive roots and fundamental weights of Lie groups of types $B$, $C$ and $D$.

\begin{lem} \label{fund} (\cite{kuznetsov2016exceptional} Section 9)
We define \[e=\left\{\begin{matrix}\frac{1}{2}& \text{if G is of type B},\\
1& \text{if G is of type C},\\
0& \text{if G is of type D}.\\
\end{matrix}\right.\]
For Lie algebras of types $B$, $C$ and $D$, we can take orthogonal bases of the $\mathbb{R}$-vector space spanned by the vectors corresponding to the simple roots of these Lie algebras such that the positive roots are $$\Phi^+_B =\{e_i+e_j\}_{i<j}\cup\{e_i-e_j\}_{i<j}\cup\{e_i\}_i,$$
$$\Phi^+_C =\{e_i+e_j\}_{i\leq j}\cup\{e_i-e_j\}_{i<j},$$
$$\Phi^+_D =\{e_i+e_j\}_{i<j}\cup\{e_i-e_j\}_{i<j}.$$
The fundamental weights  $$\lambda_i^{B,C,D}=e_1+\dots+e_i,$$ for $i\leq n-2+2e$ and
$$\lambda_n^{B,C}=e(e_1+\dots+e_{n-1}+e_n),$$
$$\lambda_{n-1}^D=\frac{1}{2}(e_1+\dots+e_{n-1}-e_n),$$
$$\lambda_n^{D}=\frac{1}{2}(e_1+\dots+e_{n-1}+e_n).$$
$$So ~\rho=(n+e-1)e_1+(n+e-2)e_2+\dots+ee_n.$$
\end{lem}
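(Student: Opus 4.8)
The plan is to treat this as a direct verification against the standard Euclidean realization of the classical root systems, since the assertion is the explicit description of roots and weights for types $B$, $C$, $D$ already recorded in the literature. The only inputs I need are the standard choice of simple roots and the defining property of the fundamental weights, $\frac{2(\lambda_i,\alpha_j)}{(\alpha_j,\alpha_j)}=\delta_{ij}$.

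First I would fix the simple roots in the orthonormal basis $e_1,\dots,e_n$: for all three types take $\alpha_i=e_i-e_{i+1}$ for $1\leq i\leq n-1$, and set $\alpha_n=e_n$ in type $B$, $\alpha_n=2e_n$ in type $C$, and $\alpha_n=e_{n-1}+e_n$ in type $D$. With this choice a positive root is exactly a non-negative integral combination of the $\alpha_i$ lying in $\Phi$; enumerating these combinations reproduces the three lists. The differences $e_i-e_j$ with $i<j$ come from the telescoping chains $\alpha_i+\dots+\alpha_{j-1}$, common to all three types. The short roots $e_i$ occur only for type $B$ and the long roots $2e_i$ only for type $C$ (these are why $\{e_i+e_j\}$ is indexed by $i<j$ for $B$ but by $i\leq j$ for $C$), whereas for type $D$ the last node contributes the fork root $e_{n-1}+e_n$ rather than a multiple of $e_n$, so neither the $e_i$ nor the $2e_i$ family appears and only $e_i\pm e_j$ survive.

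Next I would verify the fundamental weights by checking the Kronecker relation node by node. Since $(\alpha_j,\alpha_j)=2$ for $j\leq n-1$, pairing any initial sum $e_1+\dots+e_i$ against $\alpha_j^\vee=e_j-e_{j+1}$ telescopes to $\delta_{ij}$, which settles $\lambda_i=e_1+\dots+e_i$ for $i\leq n-2+2e$ on the long nodes. The only delicate point is the last node, where $\alpha_n^\vee=\frac{2\alpha_n}{(\alpha_n,\alpha_n)}$ equals $2e_n$, $e_n$, or $e_{n-1}+e_n$ for $B$, $C$, $D$ respectively; pairing the candidate spinor weights $\tfrac{1}{2}(e_1+\dots-e_n)$ and $\tfrac{1}{2}(e_1+\dots+e_n)$, and the weight $e(e_1+\dots+e_n)$, against this coroot and against the long nodes returns precisely $\delta_{in}$. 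This single computation is where the factor $e$ and the half-integer coefficients are forced.

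Finally I would obtain $\rho=\sum_{i=1}^n\lambda_i$ by collecting coefficients of each $e_j$: the long nodes contribute once for every $\lambda_i$ with $i\geq j$, giving a count that decreases linearly in $j$, and the last node(s) add the fixed amount $e$ (for type $D$ the two half-spinor weights combine so that their $e_n$ contributions cancel, leaving $e_1+\dots+e_{n-1}$). Summing these yields coefficient $n+e-j$ for $e_j$, which is the stated $\rho$. The step requiring the most care, rather than genuine difficulty, is the treatment of the final simple root: because its length differs across the three types, the coroot normalization and hence the value of $e$ must be tracked precisely, and it is exactly this bookkeeping that unifies the three cases into one set of formulas; everything else is routine telescoping.
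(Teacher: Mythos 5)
Your verification is correct, and it is more than the paper itself provides: the paper's entire proof of this lemma is a citation to the Appendix of Carter and to Lectures 15 and 18 of Fulton--Harris. Your argument --- fixing the simple roots as $\alpha_i=e_i-e_{i+1}$ for $i<n$ with $\alpha_n=e_n$, $2e_n$, $e_{n-1}+e_n$ in types $B$, $C$, $D$, enumerating the positive roots, checking $\frac{2(\lambda_i,\alpha_j)}{(\alpha_j,\alpha_j)}=\delta_{ij}$ node by node (correctly tracking the coroots $2e_n$, $e_n$, $e_{n-1}+e_n$ at the last node, which is where the parameter $e$ is forced), and summing the fundamental weights to get the coefficient $n+e-j$ of $e_j$ in $\rho$ --- is exactly the standard computation contained in those references, carried out correctly, including the cancellation of the $e_n$-contributions of the two half-spinor weights in type $D$. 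There is no gap; the only difference from the paper is that you supply the routine verification rather than deferring to the literature.
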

\begin{proof}
Refer to the Appendix of Carter\cite{carter2005lie} or Lecture 15 and 18 of Fulton-Harris\cite{fulton2013representation}.
\end{proof}

We have the following lemma about the dimensions of isotropic Grassmannians, which can be found in \cite{snow1989homogeneous}  Section 9.
\begin{lem}
 The dimension of an isotropic Grassmannian $G/P(\alpha_k)$ is \[\dim G/P(\alpha_k)=\left\{\begin{matrix}\frac{k(4n+1-3k)}{2},& ~\text{if }G=B_n ~\text{and } ~C_n,\\
 \frac{k(4n-1-3k)}{2},& ~\text{if }G=D_n (k\neq n-1).\\
 \end{matrix}\right.\]
\end{lem}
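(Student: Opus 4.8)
The plan is to reduce the statement to a root count and then carry it out family by family. The starting point is the description of $\mathcal{P}(\{\alpha_k\})$ from Section 2: since $\mathfrak{g}=\mathfrak{h}\oplus(\oplus_{\alpha\in\Phi^+}\mathfrak{g}_\alpha)\oplus(\oplus_{\alpha\in\Phi^-}\mathfrak{g}_\alpha)$ with one-dimensional root spaces, the tangent space of $G/P(\alpha_k)$ at the base point is $\mathfrak{g}/\mathcal{P}(\{\alpha_k\})\cong\oplus_{\alpha\in\Phi^-\setminus\Phi^-(\{\alpha_k\})}\mathfrak{g}_\alpha$. Hence $\dim G/P(\alpha_k)$ equals the number of negative roots not supported away from $\alpha_k$, and by the symmetry $\alpha\mapsto-\alpha$ this equals the number of positive roots $\alpha=\sum_i c_i\alpha_i$ with $c_k\neq0$. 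So the whole problem becomes: count the positive roots in which $\alpha_k$ appears with nonzero coefficient.

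Next I would set up a dictionary between the coordinates $m_i$ in $\alpha=\sum_i m_ie_i$ and the simple-root coefficient $c_k$. Because $\alpha_\ell=e_\ell-e_{\ell+1}$ for $\ell\le n-1$ in types $B,C$ and for $\ell\le n-2$ in type $D$, while the exceptional root $\alpha_n$ (namely $e_n$, $2e_n$ or $e_{n-1}+e_n$) involves only $e_{n-1},e_n$, one gets $m_\ell=c_\ell-c_{\ell-1}$ for every index below the fold. Telescoping then gives $c_k=m_1+\dots+m_k$ whenever $k\le n-1$ (types $B,C$) or $k\le n-2$ (type $D$), which are exactly the ranges under consideration. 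Thus a positive root contributes to the dimension precisely when the partial sum $m_1+\dots+m_k$ is positive, and this is the criterion I would use throughout.

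With this criterion the count is elementary, using the explicit positive roots from Lemma \ref{fund}. For a root $e_i-e_j$ with $i<j$ the partial sum is positive iff $i\le k<j$, which yields $k(n-k)$ roots; for $e_i+e_j$ with $i<j$ it is positive iff $i\le k$, yielding $\sum_{i=1}^{k}(n-i)=kn-\tfrac{k(k+1)}{2}$ roots; and each extra short root $e_i$ (type $B$) or long root $2e_i$ (type $C$) has positive partial sum iff $i\le k$, contributing a further $k$. Summing the contributions and simplifying produces $\tfrac{k(4n+1-3k)}{2}$ in types $B$ and $C$ (the families $\{e_i\}$ and $\{2e_i\}$ both contributing the same $k$), and $\tfrac{k(4n-1-3k)}{2}$ in type $D$, where this last family is absent. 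This reproduces both lines of the claimed formula.

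The step I would be most careful about — and the only genuine subtlety — is the validity of the telescoping identity near the end of the Dynkin diagram. One must verify that no root $e_i\pm e_j$ with $j\in\{n-1,n\}$ spoils the relation $m_\ell=c_\ell-c_{\ell-1}$ for the indices $\ell\le k$ that actually enter the partial sum; this is exactly what the restrictions $k\le n-1$ (types $B,C$) and $k\le n-2$ (type $D$) guarantee, and it explains why the cases $k=n$ (and $k=n-1$ in type $D$) are excluded here and handled separately. Once this is secured, the remainder is pure bookkeeping over the three root systems.
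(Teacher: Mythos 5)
Your root count is correct and the arithmetic checks out: identifying $\dim G/P(\alpha_k)$ with the number of positive roots whose $\alpha_k$-coefficient is nonzero, translating that condition into positivity of the partial sum $m_1+\dots+m_k$ of the $e_i$-coordinates, and tallying $k(n-k)$ roots of the form $e_i-e_j$, $kn-\tfrac{k(k+1)}{2}$ of the form $e_i+e_j$ ($i<j$), plus $k$ short (resp.\ long) roots in type $B$ (resp.\ $C$), does give $\tfrac{k(4n+1-3k)}{2}$ and $\tfrac{k(4n-1-3k)}{2}$. Note, however, that the paper offers no proof at all here --- it simply cites Section 9 of Snow --- so your argument is a self-contained derivation rather than a variant of the paper's; that is a genuine plus, since it makes the lemma verifiable from the data already set up in Lemma \ref{fund}.

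One correction of scope: you assert that the case $k=n$ is ``excluded here and handled separately,'' but the lemma as stated (and as used later, e.g.\ in the proof of Theorem \ref{thm=n}, where $\dim X=\tfrac{n(n+1)}{2}$ for $B_n/P(\alpha_n)$, $C_n/P(\alpha_n)$ and $\tfrac{n(n-1)}{2}$ for $D_n/P(\alpha_n)$) does cover $k=n$; the only genuine exclusion is $k=n-1$ in type $D$, where the formula really fails because $c_{n-1}\neq m_1+\dots+m_{n-1}$. Your proof closes this gap with one extra observation: at $k=n$ one still has $c_n=m_1+\dots+m_n$ in type $B$ and $c_n=\tfrac12(m_1+\dots+m_n)$ in types $C$ and $D$, so $c_n\neq 0$ is again equivalent to positivity of the full sum, and your counting formulas, evaluated at $k=n$, return the correct values. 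With that sentence added, the proof covers the entire range claimed by the lemma.
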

Let us first define the step matrices of irreducible homogeneous vector bundles over $G/P(\alpha_k)$ when $k\neq n$.

\begin{define}
Let $E_\lambda$ be an irreducible homogeneous vector bundle over $X=G/P(\alpha_k)$ with $\lambda=a_1\lambda_1+\dots+a_n\lambda_n.$  We define its \emph{step matrix} $T_{k,\lambda}=(P_{k,\lambda} ,Q_{k,\lambda} ,R_{k,\lambda} )$ which is a $k\times (2n-k)$ matrix.
Here
$P_{k,\lambda}^{B,C,D}=(p_{ij})$ is a $k\times (n-k)$-matrix, where\[p_{ij}=
\sum\limits_{u=1+k-i}^{k+j-1}a_{u}+j+i-1,~ 1\leq i\leq k,~1\leq j\leq n-k,\]
i.e., $P^{B,C,D}_{k,\lambda}=$
  $$  \left(\begin{matrix}
a_k+1&a_k+a_{k+1}+2&\dots& \sum\limits_{u=k}^{n-1}a_u+n-k\\
a_{k-1}+a_k+2&a_{k-1} +a_k+a_{k+1}+3&\dots&\sum\limits_{u=k-1}^{n-1}a_u+n-k+1\\
\vdots&\vdots&\ddots&\vdots\\
\sum\limits_{u=1}^{k}a_u+k&\sum\limits_{u=1}^{k+1}a_u+k+1&\dots&\sum\limits_{u=1}^{n-1}a_u+n-1\\
   \end{matrix}   \right);$$

$Q_{k,\lambda}^{B,C}=(q_{ij})$ is a $k \times (n-k)$-matrix, where
\[q_{ij}=
\sum\limits_{u=k+1-i}^{n-1}a_{u}+\sum\limits_{u=n+1-j}^{n-1}a_{u}+2ea_n+n-k+j+i-2+2e, ~1\leq i\leq k,~1\leq j\leq n-k,\]

i.e., $Q_{k,\lambda}^{B,C}=$ \begin{normalsize}
    $$   \left(\begin{smallmatrix}
\sum\limits_{u=k}^{n-1}a_u+2ea_n+n-k+2e&\sum\limits_{u=k}^{n-1}a_u+a_{n-1}+2ea_n+n-k+1+2e&\dots&\sum\limits_{u=k}^{n-1}a_u+\sum\limits_{u=k+1}^{n-1}a_u+2ea_n+2n-2k-1+2e\\
\sum\limits_{u=k-1}^{n-1}a_u+2ea_n+n-k+1+2e&\sum\limits_{u=k-1}^{n-1}a_u+a_{n-1}+2ea_n+n-k+2+2e&\dots&\sum\limits_{u=k-1}^{n-1}a_u+\sum\limits_{u=k+1}^{n-1}a_u+2ea_n+2n-2k+1+2e\\
\vdots&\vdots&\ddots&\vdots\\
\sum\limits_{u=1}^{n-1}a_u+2ea_n+n-1+2e &\sum\limits_{u=1}^{n-1}a_u+a_{n-1}+2ea_n+n+2e&\dots&\sum\limits_{u=1}^{n-1}a_u+\sum\limits_{u=k+1}^{n-1}a_u+2ea_n+2n-k-2+2e\\
   \end{smallmatrix}    \right);$$
\end{normalsize}

$Q_{k,\lambda}^D=(q_{ij})$ is a $k \times (n-k)$-matrix, where
\[q_{ij}=
\sum\limits_{u=k+1-i}^{n-2}a_{u}+\sum\limits_{u=n+1-j}^{n}a_{u}+n-k+j+i-2,~ 1\leq i\leq k,~1\leq j\leq n-k,\]
i.e., $Q_{k,\lambda}^D=$ \begin{footnotesize}
    $$ \left(\begin{matrix}
\sum\limits_{u=k}^{n-2}a_u+a_n+n-k&\sum\limits_{u=k}^{n}a_u+n-k+1&\dots&\sum\limits_{u=k}^{n}a_u+\sum\limits_{u=k+1}^{n-2}a_u+2n-2k-1\\
\sum\limits_{u=k-1}^{n-2}a_u+a_n+n-k+1&\sum\limits_{u=k-1}^{n}a_u+n-k+2&\dots&\sum\limits_{u=k-1}^{n}a_u+\sum\limits_{u=k+1}^{n-2}a_u +2n-2k\\
\vdots&\vdots&\ddots&\vdots\\
\sum\limits_{u=1}^{n-2}a_u+a_n+n-1 &\sum\limits_{u=1}^{n}a_u+n&\dots&\sum\limits_{u=1}^{n}a_u+\sum\limits_{u=k+1}^{n-2}a_u+2n-k-2\\
   \end{matrix}    \right);$$
   \end{footnotesize}

$R_{k,\lambda}^{B,C}=(r_{ij})$ is a $(k\times k)$-matrix, where
\[r_{ij}=\left\{\begin{matrix}   n-k-1+e+ \frac{1}{2}(
\sum\limits_{u=1+k-i}^{n-1}a_u+
\sum\limits_{v=1+k-j}^{n-1}a_v+2ea_n+j+i),&i\leq j,\\
0,&i>j,\end{matrix}\right.\]
i.e., $R_{k,\lambda}^{B,C}=$
\begin{normalsize}
$$ \frac{1}{2}\left(\begin{smallmatrix}
2\sum\limits_{u=k}^{n-1}a_u+2ea_n+2n-2k+2e&a_{k-1} +2\sum\limits_{u=k }^{n-1}a_u+2ea_n+2n-2k+1+2e&\dots &\sum\limits_{u=1}^{k-1}a_u+2\sum\limits_{u=k}^{n-1}a_u+2ea_n+2n-k-1+2e\\
 0&2\sum\limits_{u=k-1}^{n-1}a_u+2ea_n+2n-2k+2+2e&\dots &\sum\limits_{u=1}^{k-2}a_u+2\sum\limits_{u=k-1}^{n-1}a_u+2ea_n+2n-k+2e\\
 \vdots&\vdots&\ddots&\vdots\\
0&0&\dots&2\sum\limits_{u=1}^{n-1}a_u+2ea_n+2n-2+2e
   \end{smallmatrix}   \right); $$
\end{normalsize}

and
$R_{k,\lambda}^D=(r_{ij})$ is a $(k\times k)$-matrix, where
\[r_{ij}=\left\{\begin{matrix}   n-k-1+ \frac{1}{2}(
\sum\limits_{u=1+k-i}^{n-2}a_u+
\sum\limits_{u=1+k-j}^{n}a_u+j+i),&i< j,\\
0,&i\geq j,\end{matrix}\right.\]
i.e., $R_{k,\lambda}^D=$
\begin{normalsize}
$$\frac{1}{2}\left(\begin{smallmatrix}
0&\sum\limits_{u=k-1}^{n}a_u+\sum\limits_{u=k}^{n-2}a_u+2n-2k+1&\sum\limits_{u=k-2}^{n}a_u+\sum\limits_{u=k}^{n-2}a_u+2n-2k+2&\dots &\sum\limits_{u=1}^{n}a_u+\sum\limits_{u=k}^{n-2}a_u+2n-k-1\\
 0&0&\sum\limits_{u=k-2}^{n}a_u+\sum\limits_{u=k-1}^{n-2}a_u+2n-2k+3&\dots &\sum\limits_{u=1}^{n}a_u+\sum\limits_{u=k-1}^{n-2}a_u+2n-k\\
 \vdots&\vdots&\vdots&\ddots&\vdots\\
0&0&0&\dots&\sum\limits_{u=1}^{n}a_u+\sum\limits_{u=2}^{n-2}a_u+2n-3\\
0&0&0&\dots& 0\\
   \end{smallmatrix}   \right). $$

\end{normalsize}
\end{define}

Now we can claim our main theorem for $G/P(\alpha_k)$ ($k\neq n)$.

\begin{thm}\label{thm<n}
Let $E_\lambda$ be an initialized irreducible homogeneous vector bundle over $X=G/P(\alpha_k)$ ($k\neq n$) with $\lambda=a_1\lambda_1+\dots+a_n\lambda_n$. Let $T_{k,\lambda}=(t_{ij})$ be its step matrix.
Denote $n_l:=\#\{t_{ij} |t_{ij}=l \}$. Then $E_\lambda$ is an ACM vector bundle if and only if $n_l\geq1$ for any integer $l\in [1,M_{k,\lambda} ],$ where \[M_{k,\lambda}=max\{t_{ij}\} =\left\{\begin{matrix}\sum\limits_{u=1}^{n-1}a_u+\sum\limits_{u=k+1}^{n-1}a_u+2ea_n+2n-k-2+2e,& \text{if G is of type B or C},\\
\sum\limits_{u=1}^{n}a_u+\sum\limits_{u=k+1}^{n-2}a_u+2n-k-2,&  \text{if G is of type D}.
\end{matrix}\right.\]
\end{thm}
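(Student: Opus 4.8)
The plan is to translate the ACM condition into a statement about the Borel--Bott--Weil index and then to read that index off the step matrix. Since ACM-ness is twist-invariant and the twist runs over all of $\mathbb{Z}$, it suffices to examine, for every $t\in\mathbb{Z}$, the bundle $E_\lambda(-t)=E_{\lambda-t\lambda_k}$, whose cohomology is controlled by $\mu_t:=\lambda+\rho-t\lambda_k$ through Theorem~\ref{borel bott weil}. If $\mu_t$ is singular then $H^i(X,E_\lambda(-t))=0$ for all $i$, which is harmless in the intermediate range; if $\mu_t$ is regular of index $p$ then the only nonvanishing cohomology sits in degree $p$. Hence $E_\lambda$ is ACM if and only if, for every $t\in\mathbb{Z}$, the weight $\mu_t$ is either singular or regular of index $0$ or $\dim X$. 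So the entire problem reduces to computing, for each integer $t$, the index $p(t)=\#\{\alpha\in\Phi^+:(\mu_t,\alpha)<0\}$ together with the locus where $\mu_t$ is singular.

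First I would discard the positive roots that play no role. Because $E_\lambda$ is initialized, Lemma~\ref{initial} gives $a_k=0$, so $\lambda+\rho$ is strongly dominant and $(\lambda+\rho,\alpha)>0$ for every $\alpha\in\Phi^+$. For a root with $(\lambda_k,\alpha)=0$ one has $(\mu_t,\alpha)=(\lambda+\rho,\alpha)>0$ for all $t$, so such roots never affect the index and never make $\mu_t$ singular. The remaining positive roots are exactly those with $(\lambda_k,\alpha)>0$, and a count using Lemma~\ref{fund} shows there are precisely $\dim X$ of them, matching the number of nonzero entries of $T_{k,\lambda}$. The core computation is then to match them one-to-one with the entries: $P_{k,\lambda}$ records the roots $e_a-e_b$ with $a\le k<b$, $Q_{k,\lambda}$ the roots $e_a+e_b$ with $a\le k<b$, and $R_{k,\lambda}$ the roots $e_a+e_b$ with $a<b\le k$ together with the short roots $e_a$ (type $B$) or long roots $2e_a$ (type $C$) on the diagonal; in each case the entry equals the threshold $(\lambda+\rho,\alpha)/(\lambda_k,\alpha)$ at which $(\mu_t,\alpha)$ changes sign, so that $(\mu_t,\alpha)<0\iff t>t_{ij}$ and $(\mu_t,\alpha)=0\iff t=t_{ij}$ for the corresponding entry $t_{ij}$.

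With this identification the remainder is combinatorial. It yields $p(t)=\#\{(i,j):t_{ij}<t\}$, while $\mu_t$ is singular exactly when $t$ equals one of the integer entries $t_{ij}$. Using $a_k=0$ one checks that $p_{11}=1$ is the minimal entry, and the block comparison carried out in the matching step shows that the maximal entry is $q_{k,n-k}$, which equals the stated $M_{k,\lambda}$ and is an integer. Consequently, for integer $t$ one has $p(t)=0$ when $t\le 1$ and $p(t)=\dim X$ when $t\ge M_{k,\lambda}+1$, whereas $0<p(t)<\dim X$ for every integer $t\in[2,M_{k,\lambda}]$. Therefore an integer $t$ can produce intermediate cohomology only in the range $2\le t\le M_{k,\lambda}$, and it does so precisely when $\mu_t$ fails to be singular, i.e. when $t$ is not attained as an entry value. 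Thus $E_\lambda$ is ACM if and only if every integer in $[2,M_{k,\lambda}]$ occurs among the $t_{ij}$; since $n_1\ge 1$ is automatic from $p_{11}=1$, this is exactly the condition $n_l\ge 1$ for all integers $l\in[1,M_{k,\lambda}]$.

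The main obstacle is the single computational step: verifying that the explicit formulas for $P_{k,\lambda}$, $Q_{k,\lambda}$ and $R_{k,\lambda}$ really are the thresholds $(\lambda+\rho,\alpha)/(\lambda_k,\alpha)$, uniformly across types $B$, $C$ and $D$. This is where the three types genuinely differ: the short roots $e_a$ and long roots $2e_a$ force the diagonal of $R_{k,\lambda}$ to be handled separately, the half-integral fundamental weights $\lambda_{n-1},\lambda_n$ in type $D$ shift the $Q$- and $R$-blocks (and push $R^D$ to be strictly upper triangular), and in types $B$ and $C$ some $R$-entries come out half-integral. I would stress that such half-integral entries are harmless: being non-integers they can never equal an integer $t$, so they never create the singularity that the ACM condition requires, and since they still lie strictly between $1$ and $M_{k,\lambda}$ they do not disturb the extremal bookkeeping above. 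Once the threshold identification and the extremal values $p_{11}=1$ and $\max t_{ij}=M_{k,\lambda}$ are confirmed, the equivalence follows immediately from the index count.
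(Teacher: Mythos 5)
Your proposal is correct and follows essentially the same route as the paper: both reduce the ACM condition via Borel--Bott--Weil to requiring that $\lambda+\rho-t\lambda_k$ be singular for every integer $t\in[1,M_{k,\lambda}]$ (after identifying $t<1$ with index $0$ and $t>M_{k,\lambda}$ with index $\dim X$), and both identify the step-matrix entries as exactly the values of $t$ at which $(\lambda+\rho-t\lambda_k,\alpha)=0$ for the positive roots $\alpha$ with $(\lambda_k,\alpha)>0$. Your packaging via the threshold function $p(t)=\#\{(i,j):t_{ij}<t\}$ and the explicit bijection between the $\dim X$ relevant roots and the nonzero entries is a slightly cleaner organization of the same computation, including the same handling of the short/long roots on the diagonal of $R$ and of the harmless half-integral entries.
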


 \begin{proof}
 If $E_\lambda$ is an ACM bundle then  $$H^i(X,E_{\lambda}(-t))=0,$$
 for all $i=1,\dots, \dim X-1$ and all integers $t$ from the definition.
 The highest weight of $E_{\lambda}(-t)$ is $\lambda-t\lambda_k.$
 By the Borel-Bott-Weil theorem, we know that in order to prove $E_\lambda$ being an ACM bundle is equivalent to showing that $\lambda+\rho-t\lambda_k$ satisfys one of the following conditions for each $t\in \mathbb{Z}$.

\begin{enumerate}
\item[1)] $\lambda+\rho-t\lambda_k$ is regular of index 0;
\item[2)] $\lambda+\rho-t\lambda_k$ is regular of index $\dim X$;
\item[3)] $\lambda+\rho-t\lambda_k$ is singular.
\end{enumerate}

In order to check these conditions, we need to compute the Killing forms of  $\lambda+\rho-t\lambda_k=\sum\limits_{u\neq k}(a_u+1)\lambda_u+(a_k+1-t)\lambda_k$ with positive roots.
For simplicity, we expand $\lambda+\rho-t\lambda_k$ in terms of the choices of the orthonormal bases in Lemma \ref{fund}.

\vspace{.5cm}
$\textbf{Type B,C:}$
$\lambda+\rho-t\lambda_k $ is equal to

\[(\underbrace{\sum\limits_{u=1}^{n-1}a_u +e(a_n+1)+n-1-t,\dots,\sum\limits_{u=k}^{n-1}a_u+e(a_n+1)+n-k-t}_{k~terms},\]
\[\underbrace{\sum\limits^{n-1}_{u=k+1} a_u+e(a_n+1)+n-k-1,\dots,e(a_n+1)}_{n-k~terms}). \]

Firstly, $\lambda+\rho-t\lambda_k$ being regular of index 0 is equivalent to saying that $\lambda+\rho-t\lambda_k$ is strongly dominant, i.e., $a_u+1>0$ $(u\neq k)$ and $1+a_k-t>0.$ By the second statement of Remark \ref{dominant}, we know that $a_u\geq 0~(u\neq k)$. So $\lambda+\rho-t\lambda_k$ being regular of index 0 is equivalent to $1+a_k-t>0.$ By Lemma \ref{initial}, the initialized condition means $a_k=0$. Thus $\lambda+\rho-t\lambda_k$ is regular of index 0 if and only if $t<1$.

If $\lambda+\rho-t\lambda_k$ is regular of index $\dim X=\frac{k(4n+1-3k)}{2}$, then $(\lambda+\rho-t\lambda_k,e_1+e_{k+1})<0$,
which means that \[\sum\limits^{n-1}_{u=k+1} a_u+e(a_n+1)+n-k-1+\sum\limits_{u=1}^{n-1}a_u+ e(a_n+1)+n-1-t<0,\]

i.e.,
\[t>\sum\limits_{u=1}^{n-1}a_u+\sum\limits_{u=k+1}^{n-1}a_u+2ea_n+2n-k-2+2e=M_{k,\lambda}^{B,C}.\]

Conversly, under such a condition, we can get that $$(\lambda+\rho-t\lambda_k,e_p+e_q) $$ for $1\leq p\leq k<q\leq n$ and $1\leq p\leq q\leq k$, and $$(\lambda+\rho-t\lambda_k,e_p-e_q) $$ for $1\leq p\leq k<q\leq n$, are all negative. So the Killing forms of exactly $$(n-k)k+\frac{k(k+1)}{2}+(n-k)k=\frac{k(4n+1-3k)}{2}$$ positive roots  with $\lambda+\rho-t\lambda_k$ are negative.

Thus $\lambda+\rho-t\lambda_k$ is regular of index $\frac{k(4n+1-3k)}{2}$ if and only if $t>M_{k,\lambda}^{B,C}$.

Hence  $E_\lambda$ being an ACM bundle is equivalent to  $\lambda+\rho-t\lambda_k$ being singular for any $t\in[1,M_{k,\lambda}^{B,C}]$ (i.e., there exists a positive root $\alpha$ such that $(\lambda+\rho-t\lambda_k,\alpha)=0$ for any integer $ t\in[1,M_{k,\lambda}^{B,C}]$).

Notice that the sign of $(\lambda+\rho-t\lambda_k,e_p) $ is as same as the sign of $(\lambda+\rho-t\lambda_k,2e_p)$. We replace $e_p$ with $2e_p$ in the following proof so that  we only need to consider positive roots of two forms $(e_p+e_q)_{p\leq q}$ and $(e_p-e_q)_{p<q}$  by Lemma \ref{fund}.

\vspace{.5cm}
$\textbf{Case 1:}$ $(\lambda+\rho-t\lambda_k,e_p-e_q)=0.$ Since the first $k$ elements and the last $n-k$ elements of the vector $\lambda+\rho-t\lambda_k$ are strictly decreasing, we only need to consider the case that $(\lambda+\rho-t\lambda_k,e_{1+k-i}-e_{k+j})=0$ for $1\leq i\leq k$ and $1\leq j\leq n-k$, which is equivalent to
\[\sum\limits^{n-1}_{u=k+j} a_i+e(a_n+1)+n-(k+j)=\sum\limits_{u=1+k-i}^{n-1}a_u+ e(a_n+1)+n-(1+k-i)-t.\]
Then \[t= \sum\limits_{u=1+k-i}^{k+j-1}a_u +i+j-1.\]

$\textbf{Case 2:}$ $(\lambda+\rho-t\lambda_k,e_p+e_q)=0$. Since the last $n-k$ elements of the vector $\lambda+\rho-t\lambda_k$ are positive, we only need to consider the cases that $1\leq p\leq q\leq k$ and $1\leq p\leq k<q\leq n.$

a) $(\lambda+\rho-t\lambda_k,e_{1+k-i}+e_{1+n-j})=0$, where $1\leq i\leq k,~1\leq j\leq n-k$,  is equivalent to
\[\sum\limits_{u=1+k-i}^{n-1}a_u+ e(a_n+1)+n-(1+k-i)-t=-(\sum\limits_{u=1+n-j}^{n-1}a_u+ e(a_n+1)+n -(1+n-j)),\]

i.e., \[t=\sum\limits_{u=k+1-i}^{n-1}a_{u}+\sum\limits_{u=n+1-j}^{n-1}a_{u}+2ea_n+n-k+j+i-2+2e.\]

b) $(\lambda+\rho-t\lambda_k,e_{1+k-i}+e_{1+k-j})=0$, where $1\leq i\leq j\leq k $, is equivalent to
\[\sum\limits^{n-1}_{ u=1+k-i} a_u+e(a_n+1)+n-(1+k-i)-t=-(\sum\limits_{u=1+k-j}^{n-1}a_u+ e(a_n+1)+n-(1+k-j)-t),\]

i.e.,\[t= n-k-1+e+ \frac{1}{2}(
\sum\limits_{u=1+k-i}^{n-1}a_u+
\sum\limits_{v=1+k-j}^{n-1}a_v+2ea_n+j+i).\]

These are the elements in $T_{k,\lambda}^{B,C}.$ Hence  $\lambda+\rho-t\lambda_k$ is singular if and only if $t$ is in $T_{k,\lambda}^{B,C}.$ Hence $E_\lambda$ is an ACM bundle if and only if $n_l\geq 1$ for any integer $l\in[1,M_{k,\lambda}^{B,C}].$

\vspace{.5cm}
$\textbf{Type D}:$ $\lambda+\rho-t\lambda_k $ is equal to \[(\underbrace{\sum\limits_{u=1}^{n-2}a_u+\frac{1}{2}a_{n-1}+\frac{1}{2}a_n+n-1-t,\dots,\sum\limits_{u=k}^{n-2}a_u+\frac{1}{2}a_{n-1}+\frac{1}{2}a_n+n-k-t,}_{k~terms}\]
\[\underbrace{\sum\limits_{u=k+1}^{n-2}a_u+\frac{1}{2}a_{n-1}+\frac{1}{2}a_n+n-k-1,..,\frac{1}{2}a_{n-1}+\frac{1}{2}a_n+1,-\frac{1}{2}a_{n-1}+\frac{1}{2}a_n}_{n-k~terms}).\]
As above arguments, $\lambda+\rho-t\lambda_k$ is regular of index 0 if and only if $t<a_k+1=1$, and $\lambda+\rho-t\lambda_k$ is regular of index $\dim X=\frac{k(4n-1-3k)}{2}$ if and only if  $t>\sum\limits_{u=1}^{n}a_u+\sum\limits_{u=k+1}^{n-2}a_u+2n-k-2=M_{k,\lambda}^D$.
Thus  $E_\lambda$ being an ACM bundles is equivalent to  $\lambda+\rho-t\lambda_k$ being singular for any $t\in[1,M_{k,\lambda}^D]$.

By  Lemma \ref{fund}, there are two types of positive roots $(e_p-e_q)_{p<q}$ and $(e_p+e_q)_{p<q}$.
Then we still have the following two cases.

\vspace{.5cm}
\textbf{Case 1:} $(\lambda+\rho-t\lambda_k,e_{1+k-i}-e_{k+j})=0,$ where $1\leq i\leq k,1\leq j\leq n-k$,  is equivalent to
\begin{small}
\[\left\{\begin{matrix}
\sum\limits_{u=1+k-i}^{n-2}a_u+\frac{1}{2}a_{n-1}+\frac{1}{2}a_n+n-(1+k-i)-t=-\frac{1}{2}a_{n-1}+\frac{1}{2}a_n, &j=n-k,\\
\sum\limits_{u=1+k-i}^{n-2}a_u+\frac{1}{2}a_{n-1}+\frac{1}{2}a_n+n-(1+k-i)-t=\sum\limits_{u=k+j}^{n-2}a_u+\frac{1}{2}a_{n-1}+\frac{1}{2}a_n+n-(k+j), &j<n-k,\\
\end{matrix}\right.\]
\end{small}

i.e., \[t= \sum\limits_{u=1+k-i}^{k+j-1}a_{u}+j+i-1.\]

\textbf{Case 2:}
a) $(\lambda+\rho-t\lambda_k,e_{1+k-i}+e_{1+n-j})=0,$ where $1\leq i\leq k,~1\leq j\leq n-k$, is equivalent to

\begin{footnotesize}

\[\left\{\begin{matrix}
\sum\limits_{u=1+k-i}^{n-2}a_u+\frac{1}{2}a_{n-1}+\frac{1}{2}a_n+n-(1+k-i)-t=-( -\frac{1}{2}a_{n-1}+\frac{1}{2}a_n), &j=1,\\
\sum\limits_{u=1+k-i}^{n-2}a_u+\frac{1}{2}a_{n-1}+\frac{1}{2}a_n+n-(1+k-i)-t=-(\sum\limits_{u=1+n-j}^{n-2}a_u+\frac{1}{2}a_{n-1}+\frac{1}{2}a_n+n-(1+n-j)),&j>1,\\
\end{matrix}\right.\]

\end{footnotesize}
i.e.,  \[t=\sum\limits_{u=k+1-i}^{n-2}a_{u}+\sum\limits_{u=n+1-j}^{n}a_{u}+n-k+j+i-2.\]

b) $(\lambda+\rho-t\lambda_k,e_{1+k-i}+e_{1+k-j})=0,$ where $1\leq i<j\leq k$, is equivalent to

\[\sum\limits_{u=1+k-i}^{n-2}a_u+\frac{1}{2}a_{n-1}+\frac{1}{2}a_n+n-(1+k-i)-t=-(\sum\limits_{u=1+k-j}^{n-2}a_u+\frac{1}{2}a_{n-1}+\frac{1}{2}a_n+n-(1+k-j)-t),\]

i.e.,  \[t= n-k-1+ \frac{1}{2}(
\sum\limits_{u=1+k-i}^{n-2}a_u+
\sum\limits_{u=1+k-j}^{n}a_u+j+i).\]

These are the elements in $T_{k,\lambda}^D$.
Hence $E_\lambda$ is an ACM bundle if and only if $n_l\geq 1$ for any integer $l\in[1,M_{k,\lambda}^D].$

\end{proof}

 \begin{ex}
 Let $E_\mu$ and $E_{\lambda}$ be initialized homogeneous bundles with highest weight $\mu=4\lambda_1+4\lambda_2,\lambda=2\lambda_4+3\lambda_5$ on $OG(3,11)=B_5/P(\alpha_3).$ We can get that
 $T_{3,\mu}^{B}=(P_{3,\mu}^{B},Q_{3,\mu}^{B},R_{3,\mu}^{B})$, where
 \begin{center}
      $P_{3,\mu}^{B}=\left(\begin{matrix}
 1&2\\
 6&7\\
 11&12\\
 \end{matrix}\right),$
  $Q_{3,\mu}^{B}=\left(\begin{matrix}
  3&4\\
  8&9\\
  13&14
  \end{matrix}\right),$
   $R_{3,\mu}^{B}=\left(\begin{matrix}
   \frac{5}{2}&5&\frac{15}{2}\\
   0&\frac{15}{2}&10\\
   0&0&\frac{25}{2}
   \end{matrix}\right),$
 \end{center}
$M_{3,\mu}^{B} =14$ and $n_l\geq1$ for any integer $l\in[1,14]$.

We can also get that
 $T_{3,\lambda}^{B}=(P_{3,\lambda}^{B},Q_{3,\lambda}^{B},R_{3,\mu}^{B})$, where

     \begin{center}
      $P_{3,\lambda}^{B}=\left(\begin{matrix}
 1&4\\
 2&5\\
 3&6\\
 \end{matrix}\right),$
  $Q_{3,\lambda}^{B}=\left(\begin{matrix}
  8&11\\
  9&12\\
  10&13
  \end{matrix}\right),$
   $R_{3,\lambda}^{B}=\left(\begin{matrix}
   6&\frac{13}{2}&7\\
   0&7&\frac{15}{2}\\
   0&0&8
   \end{matrix}\right),$
 \end{center}
$M_{3,\lambda}^{B} =13$ and $n_l\geq1$ for any integer $l\in[1,13]$. Hence they are both ACM bundles.
 \end{ex}
In Corollary \ref{corBlessn}, we will generalize the above example.

 \subsection{ACM bundles on $G/P(\alpha_k)$ for $k=n$}
In this section, we claim our theorem for isotropic Grassmannian $G/P(\alpha_n).$ Similarly, we have the following definition.

  \begin{define} \label{defequaln}
  Let $E_\lambda$ be an irreducible homogeneous vector bundle over $X=G/P(\alpha_n)$ with $\lambda=a_1\lambda_1+\dots+a_n\lambda_n.$  We define its \emph{step matrix} $T_{n,\lambda}=(t_{ij})$ which is a $(n\times n)$-matrix.
For type B and C,
\[t_{ij}=\left\{\begin{matrix}
\frac{1}{2e}(\sum\limits_{u=n+1-i}^{n-1}a_u+
\sum\limits_{v=n+1-j}^{n-1}a_u+2ea_n+j+i-2+2e),&i\leq j,\\
0,&i>j,\end{matrix}\right.\]
i.e., $T_{n,\lambda}^{B,C}=$
   $$\frac{1}{2e}\left(\begin{matrix}
  2ea_n+2e & a_{n-1}+ 2ea_n+1+2e&\dots & \sum\limits_{u=1}^{n-1}a_u+2ea_n+n-1+2e \\
 0& 2a_{n-1} +2ea_n+2+2e&\dots & \sum\limits_{u=1}^{n-1}a_u+a_{n-1}+2ea_n+n+2e \\
\vdots &\ddots&\ddots&\vdots\\
0&\cdots&0& 2\sum\limits_{u=1}^{n-1}a_u+2ea_n+2n-2+2e
   \end{matrix}   \right). $$

For type D,
   \[t_{ij}=\left\{\begin{matrix}
\sum\limits_{u=n-i+1}^{n}a_u+
\sum\limits_{v=n-j+1}^{n-2}a_u+j+i-2,&i< j,\\
0,&i\geq j,\end{matrix}\right.\]
i.e., $T_{n,\lambda}^{D}=$
   $$ \left(\begin{matrix}
0&a_{n}+1&a_n+a_{n-2}+2 & \dots & \sum\limits_{u=1}^{n-2}a_u+a_n+n-1 \\
0&0&\sum\limits_{u=n-2}^{n}a_u+3 &  \dots & \sum\limits_{u=1}^{n}a_u+n \\
  \vdots&\vdots&\vdots&\ddots&\vdots\\
 0&0&0&\dots& \sum\limits_{u=1}^{n}a_u+\sum\limits_{u=2}^{n-2}a_u+2n-3\\
 0&0&0&\dots& 0\\
   \end{matrix}   \right). $$

\end{define}

Now we state our main theorem for $G/P(\alpha_n)$.
\begin{thm}\label{thm=n}
Let $E_\lambda$ be an initialized irreducible homogeneous vector bundle over $X=G/P(\alpha_n)$ with $\lambda=a_1\lambda_1+\dots+a_n\lambda_n$. Let $T_{n,\lambda}=(t_{ij}) $ be its associated matrix.
Denote $n_l:=\#\{t_{ij}  |t_{ij}=l \}$. Then $E$ is an ACM vector bundle if and only if $n_l\geq1$ for any integer $l\in [1,M_{n,\lambda}]$ where \[M_{n,\lambda} =max\{t_{ij}\}=\left\{\begin{matrix}\frac{1}{2e}( 2\sum\limits_{u=1}^{n-1}a_u+2ea_n+2n-2+2e) ,&\text{if G is of type B or C},\\
\sum\limits_{u=1}^{n}a_u+\sum\limits_{u=2}^{n-2}a_u+2n-3,&   \text{if G is of type D}.
\end{matrix}\right.\]

\end{thm}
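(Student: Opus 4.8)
The plan is to mirror the proof of Theorem \ref{thm<n}, adapting it to the special shape of the $n$-th fundamental weight. By the definition of an ACM bundle together with the Borel-Bott-Weil theorem (Theorem \ref{borel bott weil}), $E_\lambda$ is ACM precisely when, for every $t\in\mathbb{Z}$, the weight $\lambda+\rho-t\lambda_n$ is either regular of index $0$, regular of index $\dim X$, or singular. So I would first expand $\lambda+\rho-t\lambda_n$ in the orthonormal basis of Lemma \ref{fund}. The essential difference from the case $k\neq n$ is that $\lambda_n$ is a multiple of $e_1+\dots+e_n$ rather than $e_1+\dots+e_k$, so subtracting $t\lambda_n$ now shifts \emph{every} coordinate: for types $B$, $C$ the $j$-th coordinate becomes $\sum_{u=j}^{n-1}a_u+e(a_n+1-t)+n-j$, and for type $D$ the $e_n$-coordinate becomes $\frac12(a_n-a_{n-1}-t)$ while the $j$-th coordinate ($j\leq n-1$) becomes $\sum_{u=j}^{n-2}a_u+\frac12(a_{n-1}+a_n)+(n-j)-\frac{t}{2}$.

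Next I would settle the two regular regimes. Using the initialized condition $a_n=0$ from Lemma \ref{initial}, strong dominance of $\lambda+\rho-t\lambda_n$ (regularity of index $0$) reduces to the single inequality $t<a_n+1=1$, exactly as before. For the top index, I would compute the threshold in $t$ past which every positive root pairs negatively with $\lambda+\rho-t\lambda_n$; since the coordinates are strictly decreasing, the roots $e_p-e_q$ ($p<q$) always pair positively and never contribute, so only the roots $e_p+e_q$ (and, for type $B$, the short roots $e_p$) matter. The last such root to change sign is $2e_1$ for type $C$, the short root $e_1$ for type $B$, and $e_1+e_2$ for type $D$; a direct computation should show in each case that $\lambda+\rho-t\lambda_n$ is regular of index $\dim X$ iff $t>M_{n,\lambda}$, matching the stated maximum. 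Consequently $E_\lambda$ is ACM iff $\lambda+\rho-t\lambda_n$ is singular for every integer $t\in[1,M_{n,\lambda}]$.

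It then remains to identify the values of $t$ producing singularity. Since the sign of $(\lambda+\rho-t\lambda_n,e_p)$ equals that of $(\lambda+\rho-t\lambda_n,2e_p)$, I would run the case analysis over roots of the form $e_p+e_q$ only: solving $(\lambda+\rho-t\lambda_n,e_{n+1-i}+e_{n+1-j})=0$ for $i\leq j$ (types $B$, $C$), respectively $i<j$ (type $D$), yields exactly the entries $t_{ij}$ of the step matrix $T_{n,\lambda}$, with the diagonal $i=j$ case corresponding to the short root $e_{n+1-i}$ in type $B$ and the long root $2e_{n+1-i}$ in type $C$ (this is why $T_{n,\lambda}^{B,C}$ is upper triangular including the diagonal, while $T_{n,\lambda}^D$ is strictly upper triangular). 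Hence $\lambda+\rho-t\lambda_n$ is singular iff $t$ occurs as an entry of $T_{n,\lambda}$, and the ACM condition becomes $n_l\geq1$ for every integer $l\in[1,M_{n,\lambda}]$.

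The main obstacle I anticipate is the bookkeeping in the type-$D$ singularity computation: because the two spinorial weights $\lambda_{n-1},\lambda_n$ put a $\pm\frac12 a_{n-1}$ into the $e_n$-coordinate, the roots $e_{n+1-j}+e_n$ (the $i=1$ row) behave differently from the generic $e_{n+1-i}+e_{n+1-j}$, and I would need to verify that the $a_{n-1}$-contributions cancel so that both are captured uniformly by the compact formula $\sum_{u=n-i+1}^{n}a_u+\sum_{v=n-j+1}^{n-2}a_u+i+j-2$. A secondary subtlety is keeping the factor $e$ correct throughout the $B/C$ computation and correctly distinguishing the short-root contribution (type $B$) from the long-root contribution (type $C$) when pinning down $M_{n,\lambda}^{B,C}$.
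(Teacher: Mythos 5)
Your proposal is correct and follows essentially the same route as the paper's proof: expand $\lambda+\rho-t\lambda_n$ in the orthonormal basis of Lemma \ref{fund}, use the initialized condition $a_n=0$ to pin down the index-$0$ regime as $t<1$ and the index-$\dim X$ regime as $t>M_{n,\lambda}$, and then match the singular values of $t$ with the entries of $T_{n,\lambda}$ by solving $(\lambda+\rho-t\lambda_n,e_{n+1-i}+e_{n+1-j})=0$, with the type-$D$ row $i=1$ treated separately because of the sign on $a_{n-1}$ in the $e_n$-coordinate. The subtleties you flag (the factor $e$, the short/long root on the diagonal for $B$ versus $C$, and the strict upper-triangularity in type $D$) are exactly the points the paper's computation handles.
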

\begin{proof}
$\textbf{Type B,C:}$  $\lambda+\rho-t\lambda_n$ is equal to
\[(\underbrace{\sum\limits_{u=1}^{n-1}a_u+ e(a_n+1)+n-1-et,\sum\limits_{u=2}^{n-1}a_u+e(a_n+1)+n-2-et,\dots,e(a_n+1)-et}_{n~terms}).\]
As the arguments in Theorem \ref{thm<n}, we still have $t< 1$ if and only if $\lambda+\rho-t\lambda_k$ is regular of index 0,
and $\lambda+\rho-t\lambda_k$ is regular of index $\dim X=\frac{n(n+1)}{2}$ if and only if $t>M_{n,\lambda}^{B,C}$.

Hence $E_\lambda$ being an ACM bundle is equivalent to $\lambda+\rho-t\lambda_k$ being singular for any $t\in[1,M_{n,\lambda}^{B,C}]$. Since the elements of the vector of $\lambda+\rho-t\lambda_n$ are strictly decreasing, it is enough to consider the Killing forms of $\lambda+\rho-t\lambda_k$ with $e_p+e_q$ for $p\leq q$. It is sufficient to consider $(\lambda+\rho-t\lambda_k,e_{n+1-i}+e_{n+1-j})=0$ for $1\leq i\leq j\leq n$, i.e.,
\[\sum\limits^{n-1}_{ u=n+1-i} a_u+e(a_n+1)+n-(n+1-i)-et=-(\sum\limits_{u=n+1-j}^{n-1}a_u+e(a_n+1)+n-(n+1-j) -et).\]
Finally, we get
 \[t=\frac{1}{2e}(\sum\limits_{u=n+1-i}^{n-1}a_u+
\sum\limits_{v=n+1-j}^{n-1}a_v+2ea_n+j+i-2+2e).\]

These are the elements in $T_{n,\lambda}^{B,C}$. As the proof of Theorem \ref{thm<n}, $E_\lambda$ is an ACM bundle if and only if $n_l\geq 1$ for any integer $l\in[1,M_{n,\lambda}^{B,C}].$

\vspace{.5cm}
$\textbf{Type D:}$  $\lambda+\rho-t\lambda_{n}$ is equal to
\[ (\underbrace{\sum\limits_{u=1}^{n-2}a_u+\frac{1}{2}a_{n-1}+\frac{1}{2}a_n+n-1-\frac{1}{2}t,\dots,\frac{1}{2}a_{n-1}+\frac{1}{2}a_n+1-\frac{1}{2}t,-\frac{1}{2}a_{n-1}+\frac{1}{2}a_n-\frac{1}{2}t}_{n~terms}).\]

Similarly, $\lambda+\rho-t\lambda_k$ is regular of index 0 if and only if $t< 1$, and $\lambda+\rho-t\lambda_k$ is regular of index $\dim X=\frac{n(n-1)}{2}$ if and only if  $t>\sum\limits_{u=1}^{n}a_u+\sum\limits_{u=2}^{n-2}a_u+2n-3=M_{n,\lambda}^D$.

Hence $E_\lambda$ being an ACM bundle is equivalent to $\lambda+\rho-t\lambda_n$ being singular for any integer $t\in[1,M_{n,\lambda}^D]$.

As above, we just consider $(\lambda+\rho-t\lambda_n,e_{n-i+1 }+e_{n-j+1})=0,$ with $1\leq i<j\leq n$. That is to say

\begin{large}
\[\left\{\begin{smallmatrix}
\sum\limits_{u=1+n-j}^{n-2}a_u+\frac{1}{2}a_{n-1}+\frac{1}{2}a_n+n-(1+n-j)-\frac{1}{2}t=-( -\frac{1}{2}a_{n-1}+\frac{1}{2}a_n-\frac{1}{2}t),&i=1,\\
 \sum\limits_{u=n-i+1}^{n-2}a_u+\frac{1}{2}a_{n-1}+\frac{1}{2}a_n+n-(n-i+1)-\frac{1}{2}t=-(\sum\limits_{u=n-j+1}^{n-2}a_u+\frac{1}{2}a_{n-1}+\frac{1}{2}a_n+n-(n-j+1)-\frac{1}{2}t),&i>1.\\
\end{smallmatrix}\right.\]
\end{large}

Then \[t=\sum\limits_{u=n-i+1}^{n}a_u+
\sum\limits_{u=n-j+1}^{n-2}a_u+j+i-2.\]

These are the elements in $T_{n,\lambda}^D$. Hence $E_\lambda$ is an ACM bundle if and only if $n_l\geq 1$ for any integer $l\in[1,M_{n,\lambda}^D].$

\end{proof}

 \begin{ex}
 Let $E_\mu$ be an initialized homogeneous bundle with highest weight $\mu=\lambda_2+\lambda_3$ on $OG(5,11)=B_5/P(\alpha_5).$ We can get

 \[T_{5,\mu}^{B}=\left(\begin{matrix}
 1&2&4&6&7\\
 0&3&5&7&8\\
 0&0&7&9&10\\
 0&0&0&11&12\\
 0&0&0&0&13
 \end{matrix}\right),\]
$M_{5,\mu}^{B} =13$ and $n_l\geq1$ for any integer $l\in[1,13]$. So $E_{\mu}$ is an ACM bundle.
 \end{ex}
We generalize the above example in Corollary \ref{corequalnB}.
 \subsection{Applications}
 In this section, we always assume that $G$ is a simple Lie group of type $B$, $C$ or $D$.
 We use Theorem \ref{thm<n} and Theorem \ref{thm=n} to get some further results.
\begin{cor}\label{finite}
There are only finitely many irreducible homogeneous ACM bundles up to tensoring a line bundle over $G/P(\alpha_k)$. In particular, the moduli space of projective bundles produced by irreducible homogeneous ACM bundles consists of finite points.
\end{cor}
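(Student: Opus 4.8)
The plan is to reduce the statement to counting \emph{initialized} irreducible homogeneous ACM bundles and then to bound their highest weights using the combinatorial criterion of Theorem \ref{thm<n} and Theorem \ref{thm=n}. Since $\mathrm{Pic}(G/P(\alpha_k))\cong\mathbb{Z}$ is generated by $\mathcal{O}_X(1)$, and tensoring $E_\lambda$ by $\mathcal{O}_X(t)$ replaces the highest weight $\lambda$ by $\lambda+t\lambda_k$, every line-bundle-twist class of irreducible homogeneous bundles contains, by Lemma \ref{initial}, a unique initialized representative, namely the one with $a_k=0$. Because $E$ is ACM if and only if $E(t)$ is ACM, it therefore suffices to prove that there are only finitely many initialized irreducible homogeneous ACM bundles $E_\lambda$.

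First I would record that for such a bundle the coefficients satisfy $a_k=0$ and $a_u\geq 0$ for $u\neq k$ (Remark \ref{dominant}), so $\lambda$ is determined by a tuple of non-negative integers. Next I would apply the two main theorems: $E_\lambda$ is ACM exactly when every integer $l$ in the interval $[1,M_{k,\lambda}]$ occurs among the entries $t_{ij}$ of the step matrix, i.e. $n_l\geq 1$ for all such $l$. The key observation is that the step matrix has a fixed size depending only on $n$ and $k$ --- it is a $k\times(2n-k)$ matrix when $k\neq n$ and an $n\times n$ matrix when $k=n$ --- so it has at most a constant number $N=N(n,k)$ of entries. Since the $M_{k,\lambda}$ distinct integers $1,2,\dots,M_{k,\lambda}$ must all appear among these $N$ entries, the ACM condition forces $M_{k,\lambda}\leq N$.

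Finally I would unwind the explicit formula for $M_{k,\lambda}$. In every type it is a linear form in $a_1,\dots,a_n$ with a positive constant term and with strictly positive coefficient on each $a_u$ with $u\neq k$: concretely every $a_u$ occurs in the sum $\sum_{u=1}^{n-1}a_u$ (resp. $\sum_{u=1}^{n}a_u$ in type $D$), together with the contribution of $a_n$ weighted by $2e$. Combined with $a_u\geq 0$, the bound $M_{k,\lambda}\leq N$ then yields a uniform bound $a_u\leq N$ for every $u$, so there are only finitely many admissible tuples $(a_1,\dots,a_n)$ and hence finitely many initialized ACM bundles. For the ``in particular'' clause, I would note that $\mathbb{P}(E_\lambda)\cong\mathbb{P}(E_\lambda\otimes L)$ for any line bundle $L$, so a projectivized bundle depends only on the line-bundle-twist class; the finiteness just established then shows the associated moduli space is a finite set of points.

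The step I expect to require the only genuine care is the passage from ``every integer in $[1,M_{k,\lambda}]$ is an entry'' to the numerical bound $M_{k,\lambda}\leq N$: one must check that $M_{k,\lambda}$ is itself an integer (it is, in each of types $B$, $C$ and $D$, after the factor $2e$ is cleared), so that the integers being covered are exactly $M_{k,\lambda}$ in number and the pigeonhole count applies verbatim; the half-integer entries appearing in $R_{k,\lambda}^{B}$ simply cannot equal such an integer $l$ and so only strengthen the inequality. Everything else is a direct substitution into the formulas already established.
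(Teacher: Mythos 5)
Your argument is correct and follows essentially the same route as the paper: reduce to initialized bundles ($a_k=0$), then note that the ACM criterion forces every integer in $[1,M_{k,\lambda}]$ to occur among the entries of a matrix of fixed size, so $M_{k,\lambda}$ is bounded by a constant depending only on $n$ and $k$, and since $M_{k,\lambda}$ is a linear form in the $a_u$ with positive coefficients this bounds each $a_u$. The only (immaterial) difference is the pigeonhole constant: the paper bounds $M_{k,\lambda}$ by $\dim G/P(\alpha_k)$, which is the number of potentially nonzero entries of the step matrix, while you use the total matrix size $N(n,k)$.
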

\begin{proof}
Let $E_\lambda$ be an irreducible homogeneous bundle with highest weight $\lambda=a_1\lambda_1+\dots+a_n\lambda_n.$
Without loss of generality, we may assume $E_\lambda$ is initialized i.e., $a_k=0$.
By Theorem \ref{thm<n} and \ref{thm=n}, if $E_\lambda$ is an ACM bundle, then $M_{k,\lambda}\leq \dim G/P(\alpha_k).$
Since $M_{k,\lambda}$ is the linear combination of $a_i$, where $a_i\geq0$, there are only finitely many choices of $a_i,$ i.e., there are only finitely many initialized irreducible homogeneous ACM bundles.
\end{proof}

Notice that the quadric
\[\textbf{Q}_n =\left\{\begin{matrix}B_m/P(\alpha_1),&\text{if }n=2m-1,\\
	D_m/P(\alpha_1),&\text{if }n=2m-2
\end{matrix}\right.\]
is the simplest rational homogeneous space of rank one besides Grassmannians.
Recall that there are some homogeneous bundles over $\textbf{Q}_n$ as the natural generalization of the universal subbundle and the dual of the quotient bundle over $\textbf{Q}_4\cong G(2,4)$. We call them \emph{spinor bundles}\cite{ottaviani1988spinor}. Specifically, when $n=2m-1$ there is only one spinor bundle induced by the irreducible represention with highest weight $\lambda_m$. While $n=2m-2$ there are two nonisomorphic spinor bundles induced by the irreducible representions with highest weights $\lambda_{m-1}$ and $\lambda_m$.

\begin{cor}\label{equals1}
With the notations as above.
\begin{enumerate}
\item[1.] The irreducible homogeneous ACM vector bundles over the quadric $\textbf{Q}_n$ are line bundles or spinor bundles up to tensoring a line bundle.
\item[2.] The irreducible homogeneous ACM vector bundles over $C_n/P(\alpha_1)$ are line bundles.
\end{enumerate}
 \end{cor}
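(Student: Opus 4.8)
The plan is to specialize Theorem \ref{thm<n} to $k=1$ and then run a pigeonhole argument on the entries of the step matrix. Since tensoring by a line bundle preserves ACMness and every twist class contains an initialized representative, it suffices to classify the initialized irreducible homogeneous ACM bundles; by Lemma \ref{initial} these have $a_1=0$, so $\lambda=a_2\lambda_2+\dots+a_n\lambda_n$. When $k=1$ the step matrix $T_{1,\lambda}=(P_{1,\lambda},Q_{1,\lambda},R_{1,\lambda})$ is a single row: $P_{1,\lambda}$ and $Q_{1,\lambda}$ each contribute $n-1$ entries and $R_{1,\lambda}$ contributes the single entry $r_{11}$, for $2n-1$ entries in all.

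The decisive input is the necessary direction of Theorem \ref{thm<n}: if $E_\lambda$ is ACM then each of $1,2,\dots,M_{1,\lambda}$ must occur among these entries, so the number of \emph{integer} entries lying in $[1,M_{1,\lambda}]$ is at least $M_{1,\lambda}$. I would then evaluate this count type by type. In type $C$ all $2n-1$ entries are integers, so $M_{1,\lambda}\le 2n-1$; inserting $M_{1,\lambda}^{C}=2\sum_{u=2}^{n-1}a_u+2a_n+2n-1$ forces every $a_u=0$, hence $\lambda=0$ and only line bundles survive, which proves part 2. In type $B$ the entry $r_{11}$ is an integer precisely when $a_n$ is odd, so the usable count is $2n-2$ for $a_n$ even and $2n-1$ for $a_n$ odd; combined with $M_{1,\lambda}^{B}=2\sum_{u=2}^{n-1}a_u+a_n+2n-2$ this leaves exactly $\lambda=0$ and $\lambda=\lambda_n$. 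In type $D$ the entry $r_{11}$ vanishes, so it falls outside $[1,M_{1,\lambda}]$ and only the $2n-2$ entries of $P_{1,\lambda},Q_{1,\lambda}$ are usable; with $M_{1,\lambda}^{D}=2\sum_{u=2}^{n-2}a_u+a_{n-1}+a_n+2n-3$ the bound $M_{1,\lambda}\le 2n-2$ leaves exactly $\lambda=0,\lambda_{n-1},\lambda_n$.

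To finish I would verify the converse on this short list by exhibiting $T_{1,\lambda}$ explicitly. For $\lambda=0$ one finds $P_{1,\lambda}=(1,\dots,n-1)$ and $Q_{1,\lambda}$ a block of consecutive integers abutting it, so $[1,M_{1,\lambda}]$ is covered; for $\lambda=\lambda_n$ in type $B$ the entry $r_{11}=n$ fills the single gap left between the $P$- and $Q$-blocks; for $\lambda_{n-1}$ and $\lambda_n$ in type $D$ the unique nonzero coefficient merely shifts one entry of $P_{1,\lambda}$ or $Q_{1,\lambda}$ so that $P_{1,\lambda}\cup Q_{1,\lambda}$ still exhausts $[1,M_{1,\lambda}]$. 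Recalling from the discussion preceding the corollary that $\lambda=0$ gives $\mathcal{O}$ and that the spinor bundles on $\textbf{Q}_n$ are $E_{\lambda_m}$ when $n=2m-1$ and $E_{\lambda_{m-1}},E_{\lambda_m}$ when $n=2m-2$, part 1 follows after restoring the twist.

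I expect the main obstacle to be the integrality-and-vanishing bookkeeping for $r_{11}$, since that single entry is exactly what separates the three answers: its non-integrality for even $a_n$ (type $B$) and its vanishing (type $D$) each lower the usable count by one, which is precisely what is needed to admit the spinor weights while excluding everything else, whereas in type $C$ the full count $2n-1$ admits nothing beyond $\lambda=0$.
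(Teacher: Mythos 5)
Your proof is correct and takes essentially the same route as the paper's: both reduce to an initialized representative with $a_1=0$ and then apply the pigeonhole bound $M_{1,\lambda}\le\#\{\text{integer entries of }T_{1,\lambda}\}$ (which the paper packages as $M_{1,\lambda}\le\dim G/P(\alpha_1)$ via Corollary \ref{finite}) to force $\lambda\in\{0,\lambda_m\}$, $\{0,\lambda_{m-1},\lambda_m\}$, or $\{0\}$ in types $B$, $D$, $C$ respectively. Your finer bookkeeping on the integrality or vanishing of $r_{11}$ is not actually needed (the crude count $2n-1$, resp. $2n-2$, already pins down the same list), and your explicit check that the surviving weights do satisfy the step-matrix criterion supplies the sufficiency direction that the paper leaves implicit.
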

 \begin{proof}
 For simplicity, we may assume $E_\lambda$ is initialized, which means $a_1=0$.

 1. If $n=2m-1$, then $\textbf{Q}_n=B_m/P(\alpha_1)$. Since $E_\lambda$ is an ACM bundle, we have \[M_{1,\lambda}^B= \sum\limits_{u=2}^{m-1}a_u+\sum\limits_{u=2}^{m}a_u+2m-2\leq 2m-1.\]
By calculation, it's easy to see  that these $a_i$ have only two choices: either $a_i=0~(1\le i\le m)$, which means that $E_\lambda$ is a line bundle, or  $a_m=1, a_i=0~(i\ne m)$, which means that $E_\lambda$ is the spinor bundle over $\textbf{Q}_n$.

If $n=2m-2$, $\textbf{Q}_n=D_m/P(\alpha_1)$. Since $E_\lambda$ is an ACM bundle, we have \[M_{1,\lambda}^D= \sum\limits_{u=2}^{m-2}a_u+\sum\limits_{u=2}^{m}a_u+2m-3\leq 2m-2.\]
  These $a_i$ have three choices:  $a_i=0~(1\le i\le m)$, which means that $E_\lambda$ is a line bundle, $a_{m-1}=1, a_i=0~(i\ne m-1)$, or $a_m=1, a_i=0~(i\ne m)$. The bundles given by the latter two cases are two nonisomorphic spinor bundles on $\textbf{Q}_n$.

  2. The dimension of $C_n/P(\alpha_1)$ is $2n-1$. Since $E_\lambda$ is an ACM bundle, we still have \[M_{1,\lambda}^C= 2\sum\limits_{u=2}^{n}a_u+2n-1\leq 2n-1.\]
  The only choice of $a_i$ is $a_i=0~(1\le i\le n)$ which means that $E_\lambda$ is a line bundle.
  \end{proof}
  \begin{rmk}
\emph{There is a strong version of Corollary \ref{equals1}.1 (see \cite{buchweitz1987cohen}\cite{knorrer1987cohen}\cite{ottaviani1989some}).}
  \end{rmk}

For general cases, the step matrices are complicated. However, we can use succinct forms to characterize some irreducible initialized homogeneous ACM bundles with special highest weights over isotropic Grassmannians of types $B$, $C$ and $D$.
We first state the theorem for $B_n/P(\alpha_k)$.

\begin{cor}\label{corBlessn}
Let $E_\lambda$ be an initialized irreducible homogeneous bundle over $B_n/P(\alpha_k)$ for $1<k<n.$
\begin{enumerate}
\item[1.] Suppose $\lambda=a_1\lambda_1+\dots+a_{k-1}\lambda_{k-1}.$ If $0\leq a_{1+k-i}\leq 2n-2k$ for all $2\leq i\leq k$, then $E_\lambda$ is an ACM bundle.
\item[2.] Suppose $\lambda=a_{k+1}\lambda_{k+1}+\dots+a_{n}\lambda_{n}.$ Then $E_\lambda$ is an ACM bundle if and only if $0\leq a_{i}\leq k-1$ for all $k+1\leq i\leq n-1$, and $0\leq a_n\leq 2k-1$.
\end{enumerate}
\end{cor}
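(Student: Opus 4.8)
The plan is to apply Theorem \ref{thm<n} directly: for $B_n$ (so $e=\tfrac12$ and $2e=1$), $E_\lambda$ is ACM if and only if the entries of the step matrix $T_{k,\lambda}^{B}=(P_{k,\lambda}^{B},Q_{k,\lambda}^{B},R_{k,\lambda}^{B})$ realize every integer in $[1,M_{k,\lambda}^{B}]$. Thus in both parts the whole problem reduces to bookkeeping of which integers occur as entries, once the defining formulas are specialized by setting the vanishing coordinates $a_u=0$. The guiding observation is that after this specialization each row (Part 1) or column (Part 2) of $P$ and of $Q$ becomes a block of \emph{consecutive} integers, so the question becomes whether these blocks, together with the integer entries of $R$, tile $[1,M_{k,\lambda}^{B}]$ without gaps.

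For Part 1 I set $a_k=\dots=a_n=0$ and write $s_i:=\sum_{u=1+k-i}^{k-1}a_u$. A short computation then gives $p_{ij}=s_i+i+j-1$ and $q_{ij}=s_i+(n-k)+i+j-1$, so the $i$-th rows of $P$ and $Q$ together form exactly the consecutive block $[\,s_i+i,\ s_i+i+2(n-k)-1\,]$. Since $s_{i+1}=s_i+a_{k-i}$, the block of row $i+1$ begins at $s_i+a_{k-i}+i+1$, which is $\le (\text{end of block }i)+1$ precisely when $a_{k-i}\le 2(n-k)-1$; and when $a_{k-i}=2(n-k)$ the unique skipped integer $s_i+i+2(n-k)$ is recovered as the super-diagonal entry $r_{i,i+1}$. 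Hence, under the hypothesis $a_{1+k-i}\le 2n-2k$ for $2\le i\le k$, the blocks chain together; since row $1$ starts at $1$ and row $k$ ends at $s_k+k+2(n-k)-1=M_{k,\lambda}^{B}$, every integer in $[1,M_{k,\lambda}^{B}]$ is hit and $E_\lambda$ is ACM.

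For Part 2 I set $a_1=\dots=a_k=0$ and put $A:=\sum_{u=k+1}^{n-1}a_u$. Now the entries depend on $i$ only through $+i$, so, writing the $j$-th column of $P$ as $\{c_j+1,\dots,c_j+k\}$ with $c_j=\sum_{u=k+1}^{k+j-1}a_u+j-1$, each column of $P$ (and similarly of $Q$) is a length-$k$ run of consecutive integers; consecutive columns of $P$ abut exactly when $a_{k+j}\le k-1$ (and those of $Q$ when $a_{n-j}\le k-1$), which accounts for the first family of bounds. The new point is the block $R$: its integer entries equal $A+n-k+\tfrac12(a_n+i+j-1)$ with $i\le j$ and $i+j\equiv a_n+1 \pmod 2$, and as $i+j$ runs through one parity class they again form a single run of consecutive integers sitting between the top of $P$ (namely $A+n-1$) and the bottom of $Q$ (namely $A+a_n+n-k+1$). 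Checking the two junctions $P\to R$ and $R\to Q$ shows that both close up exactly when $a_n\le 2k-1$, giving sufficiency. For necessity one runs the argument backwards: if some $a_{k+j}\ge k$, taking the least such $j$ the integer $c_j+k+1$ is skipped by $P$, and one verifies it lies below $\min R$ and $\min Q$, hence is missed entirely; if $a_n\ge 2k$ the junction between $P$ and $R$ (or $R$ and $Q$) opens a gap at an explicit value that again escapes all three blocks.

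The main obstacle is the necessity half of Part 2. Unlike Part 1, where only sufficiency is claimed and a stray skipped integer can be absorbed by $R$, here one must certify that a violated bound leaves an integer of $[1,M_{k,\lambda}^{B}]$ genuinely uncovered by \emph{all} of $P$, $Q$ and $R$ simultaneously. This forces careful control of the parity condition governing which entries of $R$ are integral, together with sharp comparisons between the location of the skipped value and the minimal and maximal integer entries of the other two blocks, which for small $a_n$ can descend into the value range of $P$. Organizing these comparisons, rather than any single computation, is where the real work lies.
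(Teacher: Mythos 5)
Your proposal is correct and follows essentially the same route as the paper's own proof: specialize the step matrix of Theorem \ref{thm<n} with the vanishing coefficients, observe that rows (Part 1) or columns (Part 2) of $P^B_{k,\lambda}$ and $Q^B_{k,\lambda}$ give runs of consecutive integers whose junctions close exactly under the stated bounds, use the integer entries of $R^B_{k,\lambda}$ to fill the remaining gaps, and for necessity exhibit a skipped integer lying below the minima of $R$ and $Q$. The only cosmetic difference is that you treat the integral entries of $R$ as a single consecutive run within a parity class and check two junctions, where the paper writes $a_n=k+m$ and names the specific entries $r_{l,\,l+k-m-1}$ plugging the gap; these are equivalent.
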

\begin{proof}
By Theorem \ref{thm<n}, proving that $E_\lambda$ is an ACM bundle is equivalent to proving all integer values between 1 and $M_{k,\lambda}^B$ appear as entries of $T_{k,\lambda}^B$.

1. Since $a_u=0$ for
$u\geq k$, then we can write $T_{k,\lambda}^B=(P_{k,\lambda}^B,Q_{k,\lambda}^B,R_{k,\lambda}^B)$ as follows.

$$P_{k,\lambda}^B= \left(\begin{matrix}
 1& 2&\dots&  n-k\\
 \vdots&\vdots&\ddots&\vdots\\
\bm{\sum\limits_{u=1+k-i}^{k-1}a_u+i}&\sum\limits_{u=1+k-i}^{k-1}a_u+i+1&\dots&\sum\limits_{u=1+k-i}^{k-1}a_u+i+n-k-1\\
\vdots&\vdots&\ddots&\vdots\\
\sum\limits_{u=1}^{k-1}a_u+k&\sum\limits_{u=1}^{k-1}a_u+k+1&\dots&\sum\limits_{u=1}^{k-1}a_u+n-1\\
   \end{matrix}   \right), $$
where we emphasize $$p_{i1}=\sum\limits_{u=1+k-i}^{k-1}a_u+i;$$

   \begin{footnotesize}
    $$ Q_{k,\lambda}^B= \left(\begin{matrix}
 n-k+1& n-k+2&\dots& 2n-2k\\
 \vdots&\vdots&\ddots&\vdots\\
 \sum\limits_{u=2+k-i}^{k-1}a_u+n-k+i-1& \sum\limits_{u=2+k-i}^{k-1}a_u+n-k+i&\dots& \bm{\sum\limits_{u=2+k-i}^{k-1}a_u+2n-2k+i-2}\\
\vdots&\vdots&\ddots&\vdots\\
\sum\limits_{u=1}^{k-1}a_u+n &\sum\limits_{u=1}^{k-1}a_u+n+1&\dots&\sum\limits_{u=1}^{k-1}a_u+2n-k-1\\
   \end{matrix}    \right),$$
   \end{footnotesize}
where we emphasize  $$q_{i-1,n-k}=\sum\limits_{u=2+k-i}^{k-1}a_u+2n-2k+i-2;$$
and $$r_{i-1,i}=\sum\limits_{u=2+k-i}^{k-1}a_u+\frac{a_{1+k-i}}{2}+n-k+i-1.$$

Notice first that the entries of each row of matrices $P_{k,\lambda}^B$ and $Q_{k,\lambda}^B$ are consecutive integers and $p_{i,n-k}=q_{i,1}-1$ for fixed integer $i~(1\le i\le k)$.

If $a_{k-1}\leq 2n-2k-1$, then $p_{21}=a_{k-1}+2\leq 2n-2k+1$. It's easy to see that any integer $l\in[1,p_{21}]$ appears as an entry of $P_{k,\lambda}^B$ or $Q_{k,\lambda}^B$. If $a_{k-1}=2n-2k$, then $p_{21}=a_{k-1}+2=2n-2k+2$. Any integer $l\in[1,p_{21}-2=2n-2k]$ appears as an entry of $P_{k,\lambda}^B$ or $Q_{k,\lambda}^B$. In this case, $p_{21}-1= 2n-2k+1=\frac{2n-2k }{2}+n-k+1=r_{12}$ appears as an entry of $R_{k,\lambda}^B$. It follows that any integer $l\in [1,p_{21}]$ appears as an entry of $T_{k,\lambda}^B$ as long as $a_{k-1}\leq 2n-2k $.

Arguing in the same way if $a_{1+k-i}\le 2n-2k-1~(2<i\le k)$, we have
 $$p_{i1}=\sum\limits_{u=1+k-i}^{k-1}a_u+i\le\sum\limits_{u=2+k-i}^{k-1}a_u+2n-2k+i-1=q_{i-1,n-k}+1.$$
It's obvious that any integer $l\in [p_{i-1,1},p_{i1}]$ appears as an entry of $P_{k,\lambda}^B$ or $Q_{k,\lambda}^B$.

 If $a_{1+k-i}=2n-2k $, then any integer $l\in [p_{i-1,1},p_{i1}-2=q_{i-1,n-k}]$ appears as an entry of $P_{k,\lambda}^B$ or $Q_{k,\lambda}^B$. Notice that
$$p_{i1}-1=q_{i-1,n-k}+1= \sum\limits_{u=2+k-i}^{k-1}a_u+\frac{2n-2k}{2}+n-k-1+i=r_{i-1,i}.$$
So any integer $l\in [p_{i-1,1},p_{i1}]$ appears as an entry of $T_{k,\lambda}^B$ as long as $a_{1+k-i}\leq 2n-2k$.

To sum up, if $0\leq a_{1+k-i}\leq 2n-2k$ for all $2\leq i\leq k$, then any integer $l\in [1,p_{k1}]$ appears as an entry of $T_{k,\lambda}^B$. Since $M_{k,\lambda}^B=q_{k,n-k}$, all integer values between 1 and $M_{k,\lambda}^B$ appear as entries of $T_{k,\lambda}^B$.

2. We first note that $a_u=0$ for $u\leq k.$ Then $T_{k,\lambda}^B=(P_{k,\lambda}^B,Q_{k,\lambda}^B,R_{k,\lambda}^B)$, where
$$P_{k,\lambda}^B= \left(\begin{matrix}
 1& \dots &\bm{\sum\limits_{u=k+1}^{k-1+j}a_u+j}&\dots& \sum\limits_{u=k+1}^{n-1}a_u+n-k\\
 2&\dots & \sum\limits_{u=k+1}^{k-1+j}a_u+j+1&\dots&\sum\limits_{u=k+1}^{n-1}a_u+n-k+1\\
\vdots&\vdots &\vdots&\ddots&\vdots\\
 k& \dots &\bm{\sum\limits_{u=k+1}^{k-1+j}a_u+j+k-1}&\dots&\sum\limits_{u=k+1}^{n-1}a_u+n-1\\
   \end{matrix}   \right), $$

    $$ Q_{k,\lambda}^B= \left(\begin{smallmatrix}
\sum\limits_{u=k+1}^{n}a_u+n-k+1&\dots&\bm{\sum\limits_{u=k+1}^{n}a_u+\sum\limits_{u=n+1-j}^{n-1}a_u+n-k+j}&\dots&\sum\limits_{u=k+1}^{n}a_u+\sum\limits_{u=k+1}^{n-1}a_u+2n-2k\\
\sum\limits_{u=k+1}^{n}a_u+n-k+2&\dots&\sum\limits_{u=k+1}^{n}a_u+\sum\limits_{u=n+1-j}^{n-1}a_u+n-k+j+1&\dots&\sum\limits_{u=k+1}^{n}a_u+\sum\limits_{u=k+1}^{n-1}a_u+2n-2k+1\\
\vdots&\dots&\vdots&\ddots&\vdots\\
\sum\limits_{u=k+1}^{n}a_u+n &\dots&\bm{\sum\limits_{u=k+1}^{n}a_u+\sum\limits_{u=n+1-j}^{n-1}a_u+n+j-1}&\dots&\sum\limits_{u=k+1}^{n}a_u+\sum\limits_{u=k+1}^{n-1}a_u+2n-k-1\\
   \end{smallmatrix}    \right)$$
and
$$ r_{ij}=\sum\limits_{u=k+1}^{n-1}a_u+n-k-1+\frac{a_n+i+j+1}{2}.$$


 $(\impliedby)$ Notice first that the entries of all columns of matrices $P_{k,\lambda}^B$ and $Q_{k,\lambda}^B$ are consecutive integers. If $0\leq a_{i}\leq k-1$ for all $k+1\leq i\leq n-1$, then for any integer $j~(2\le j\le n-k)$,
$$p_{1j}=\sum\limits_{u=k+1}^{k-1+j}a_u+j\le \sum\limits_{u=k+1}^{k-2+j}a_u+k-1+j=p_{k,j-1}+1$$
and
$$q_{1j}=\sum\limits_{u=k+1}^{n}a_u+\sum\limits_{u=n+1-j}^{n-1}a_u+n-k+j\le \sum\limits_{u=k+1}^{n}a_u+\sum\limits_{u=n+2-j}^{n-1}a_u+n-1+j=q_{k,j-1}+1.$$

Similar to the proof of the first statement in Proposition \ref{corBlessn}, we find that any integer $l\in [1,p_{k,n-k}]$ appears as an entry of $P_{k,\lambda}^B$ and any integer $l\in [q_{11},q_{k,n-k}=M_{k,\lambda}^B]$ appears as an entry of $Q_{k,\lambda}^B$. Hence, in order to prove $E_\lambda$ being an ACM bundle, it suffices to show that any integer $l\in [p_{k,n-k},q_{11}]$ appears as an entry of $T_{k,\lambda}^B$. This is obviously true if $0\leq a_n\leq k-1$, because in this case $q_{11}=\sum\limits_{u=k+1}^{n}a_u+n-k+1\le\sum\limits_{u=k+1}^{n-1}a_u+n=p_{k,n-k}+1$.

Suppose $a_n=k+m$, where $0\leq m\leq k-1$. Then
$$q_{11}=\sum\limits_{u=k+1}^{n}a_u+n-k+1=\sum\limits_{u=k+1}^{n-1}a_u+n+m+1=p_{k,n-k}+m+2.$$
Therefore, in order to show that any integer $l\in [p_{k,n-k},q_{11}]$ appears as an entry of $T_{k,\lambda}^B$, we only need to show that for any
$l\in [1,m+1]$, $p_{k,n-k}+l$ appears as an entry of $T_{k,\lambda}^B$, which is true since \[p_{k,n-k}+l=\sum\limits_{k+1}^{n-1}a_u+n-1+l=r_{l,l+k-m-1}.\]

$(\implies)$ If for some integer $j~(2\le j\le n-k)$, $a_{k-1+j}>k-1$, then
$$p_{k,j-1}+1=\sum\limits_{u=k+1}^{k-2+j}a_u+k-1+j<p_{1j}=\sum\limits_{u=k+1}^{k-1+j}a_u+j<r_{11}<q_{11}.$$
Hence $p_{k,j-1}+1$ does not appear as an entry of $T_{k,\lambda}^B$. Similarly, if $a_n>2k-1$,
\[p_{k,n-k}+1=\sum\limits_{u=k+1}^{n-1}a_u+n<\sum\limits_{u=k+1}^{n-1}a_u+n-k+\frac{a_n+1}{2}=r_{11}<q_{11}.\]
Then integer $p_{k,n-k}+1$ can not be an entry of $T_{k,\lambda}^B$.
\end{proof}

\begin{cor}\label{corequalnB}
Let $E_\lambda$ be an initialized irreducible homogeneous bundle over $B_n/P(\alpha_n)$ with $\lambda=a_2\lambda_2+...+a_{n-2}\lambda_{n-2}$.
 If $0\leq a_i\leq 1$ for all $2\leq i\leq n-2$, then $E_\lambda$ is an ACM bundle.
\end{cor}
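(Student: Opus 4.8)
The plan is to invoke Theorem \ref{thm=n}, which reduces the ACM property of $E_\lambda$ to the purely combinatorial assertion that every integer in $[1,M_{n,\lambda}^B]$ occurs as an entry of the step matrix $T_{n,\lambda}^B$. Since $\lambda=a_2\lambda_2+\cdots+a_{n-2}\lambda_{n-2}$, the coefficients $a_1,a_{n-1},a_n$ all vanish, and (in type $B$, so $e=\frac{1}{2}$) the entries simplify. First I would introduce the partial sums $B_i:=\sum_{s=1}^{i-1}a_{n-s}+i$ and rewrite the nonzero entries as $t_{ij}=B_i+B_j-1$ for $i\le j$. The vanishing of $a_{n-1}$ and $a_1$ forces $B_1=1,\ B_2=2$ and $B_{n-1}=N-1,\ B_n=N$, where $N:=B_n$, while the hypothesis $0\le a_i\le 1$ makes every consecutive increment $B_{i+1}-B_i=a_{n-i}+1$ lie in $\{1,2\}$. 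One checks directly that $M_{n,\lambda}^B=2N-1=t_{nn}$, so the target interval is $[1,2N-1]$.

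Next I would cover the lower half $[1,N]$ using the first two rows. Row $1$ gives $t_{1j}=B_j$, so it realizes every element of $\{B_1<\cdots<B_n\}$, and row $2$ gives $t_{2j}=B_j+1$ for $j\ge 2$. Any integer of $[1,N]$ missing from row $1$ lies in a gap with $B_{j+1}-B_j=2$, i.e. equals $B_j+1$, and is therefore produced by $t_{2j}$. A symmetric argument covers the upper half $[N,2N-1]$ by the last two columns: $t_{in}=B_i+N-1$ realizes $\{B_i+N-1\}$, and $t_{i,n-1}=B_i+N-2$ (using $B_{n-1}=N-1$) fills the remaining gaps. Since the two halves overlap at $N=t_{1n}$, their union is all of $[1,2N-1]$, which shows that every integer in $[1,M_{n,\lambda}^B]$ appears and hence that $E_\lambda$ is ACM.

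The delicate point, and the one I would verify most carefully, is that the ``filling'' entries actually have admissible indices: $t_{2j}$ requires $j\ge 2$ and the entry $t_{i+1,n-1}$ used to fill a column-$n$ gap at index $i$ requires $i\le n-2$. This is exactly where $a_1=a_{n-1}=0$ enters: these force the extreme gaps $B_2-B_1$ and $B_n-B_{n-1}$ to equal $1$, so every gap of size $2$ occurs at an interior index $2\le j\le n-2$, keeping the relevant row-$2$ or column-$(n-1)$ entry inside the triangular matrix. I would also note that the bound $a_i\le 1$ is essential rather than incidental: a coefficient $\ge 2$ would create a gap of size $\ge 3$ in row $1$ that neither row $2$ nor any other entry can fill, so this method yields exactly the stated sufficient condition and cannot be relaxed.
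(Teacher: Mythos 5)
Your proof is correct and takes essentially the same approach as the paper: both reduce the statement to Theorem \ref{thm=n} and then cover $[1,M_{n,\lambda}^B]$ by showing the first two rows realize every integer up to $t_{2n}$ and the last two columns every integer from $t_{1,n-1}$ onward, using that each increment $a_{n-j}+1\le 2$ lets the adjacent row (resp.\ column) fill the size-two gaps. Your $B_i$-notation and the explicit check that the filling entries $t_{2j}$ and $t_{i+1,n-1}$ have admissible indices (which is where $a_1=a_{n-1}=0$ enters) are just a cleaner packaging of the paper's computation.
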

\begin{proof}
   Since $a_1=a_{n-1}=a_n=0,$
  $$T_{n,\lambda}^B=\left(\begin{matrix}
 1 & 2&\dots &\sum\limits_{u=2}^{n-2}a_u+n-1 & \sum\limits_{u=2}^{n-2}a_u+n\\
0& 3&\dots& \sum\limits_{u=2}^{n-2}a_u+n &\sum\limits_{u=2}^{n-2}a_u+n+1\\
\vdots&\vdots&\ddots&\vdots&\vdots\\
0& 0 &\dots &2\sum\limits_{u=3}^{n-2}a_u+a_2+2n-4&2\sum\limits_{u=3}^{n-2}a_u+a_2+2n-3\\
0& 0 &\dots &2\sum\limits_{u=2}^{n-2}a_u+2n-3&2\sum\limits_{u=2}^{n-2}a_u+2n-2\\
0& 0 &\dots&0&2\sum\limits_{u=2}^{n-2}a_u+2n-1\\
   \end{matrix}   \right). $$

When $j\geq 2$, $t_{2j}-t_{1j}=1 ~\text{and} ~ t_{i,j+1}-t_{ij}=a_{n-j}+1~(i=1,2). $
Then $t_{1,j+1}=t_{2,j}+a_{n-j}$. If $0\leq a_{n-j}\leq 1~( 2\leq j\leq n-2)$, then $t_{1,j+1}=t_{2j}$ or $t_{1,j+1}=t_{2j}+1.$
  Hence if we consider the first two rows of the step matrix, then it is easy to see that all integers between $1$ and $\sum\limits_{u=2}^{n-2}a_u+n+1$ are in $T_{n,\lambda}^B$. Meanwhile, notice that $t_{i+1,n-1}=t_{i,n}+a_{n-i}~(i\leq n-2)$.
   If $0\leq a_{n-i}\leq 1~( 2\leq i\leq n-2)$, then $t_{i+1,n-1}=t_{i,n}$ or $t_{i+1,n-1}=t_{i,n}+1.$ By considering the last two columns of the step matrix, all integers between $\sum\limits_{u=2}^{n-2}a_u+n-1$ and $2\sum\limits_{u=2}^{n-2}a_u+2n-1$ are in $T_{n,\lambda}^B$. In consequence, all integers between $1$ and $M_{n,\lambda}^B=2\sum\limits_{u=2}^{n-2}a_u+2n-1$  are in $T_{n,\lambda}^B$. Hence $E_\lambda$ is an ACM bundle.

\end{proof}
For $C_n/P(\alpha_k)$, we have the following consequence.

\begin{cor}\label{cor<=nC}
Let $E_\lambda$ be an initialized irreducible homogeneous bundle over $C_n/P(\alpha_k)$ for $1<k\leq n$.
\begin{enumerate}
\item[1.] Suppose $\lambda=a_1\lambda_1+\dots+a_{k-1}\lambda_{k-1}.$ If $0\leq a_{1+k-i}\leq 2n-2k+1$ for all $2\leq i\leq k$, then $E_\lambda$ is an ACM bundle.
\item[2.] Suppose $\lambda=a_{k+1}\lambda_1+\dots+a_{n}\lambda_{n}.$ $E_\lambda$ is an ACM bundle if and only if $0\leq a_{i}\leq k-1$ for all $k+1\leq i\leq n$.
\end{enumerate}
\end{cor}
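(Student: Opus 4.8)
The plan is to invoke Theorem \ref{thm<n} (when $k<n$) and Theorem \ref{thm=n} (when $k=n$), which reduce the ACM property to a purely combinatorial condition: $E_\lambda$ is ACM if and only if every integer in $[1,M_{k,\lambda}^C]$ occurs as an entry of the step matrix $T_{k,\lambda}^C$. Throughout I set $e=1$. In part~1 we have $a_u=0$ for all $u\geq k$ (in particular $a_n=0$), and in part~2 we have $a_u=0$ for all $u\leq k$. After this specialization the blocks $P_{k,\lambda}^C$ and $Q_{k,\lambda}^C$ simplify so that in part~1 each \emph{row} is a run of consecutive integers, while in part~2 each \emph{column} is such a run; this is the structural fact that drives everything, and it mirrors the treatment of type $B$ in Corollary \ref{corBlessn}.

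For part~1 (a sufficiency claim only) I would first check that within each row $i$ the three blocks fit together: one computes $q_{i1}=p_{i,n-k}+2$ and $r_{ii}=p_{i,n-k}+1$, so that $P$, the diagonal entry of $R$, and $Q$ together exhaust the interval $[p_{i1},q_{i,n-k}]$. It then remains to chain consecutive rows. The gap between the end of row $i$ and the start of row $i+1$ equals $p_{i+1,1}-q_{i,n-k}=a_{k-i}-(2n-2k)+1$, so the hypothesis $0\leq a_{1+k-i}\leq 2n-2k+1$ forces this difference to be at most $2$: when it is $\le 1$ the rows overlap or abut, and when it equals $2$ (the extremal value $a_{k-i}=2n-2k+1$) the single missing integer $q_{i,n-k}+1$ equals the off-diagonal entry $r_{i,i+1}$, which is an integer precisely because $2n-2k+1$ is odd. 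Hence $[1,M_{k,\lambda}^C]$ is covered and $E_\lambda$ is ACM.

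For part~2 (an equivalence) I would run the same consecutive-run argument along columns: the bound $a_i\leq k-1$ for $k+1\leq i\leq n-1$ makes the columns of $P_{k,\lambda}^C$ chain to cover $[1,p_{k,n-k}]$ and the columns of $Q_{k,\lambda}^C$ chain to cover $[q_{11},M_{k,\lambda}^C]$. The decisive step is to bridge the middle interval $[p_{k,n-k},q_{11}]$ by $R$. Here one must remember that only the \emph{integer} entries of $R$ help cover integers; since $r_{ij}=n-k+\sum_{u=k+1}^{n-1}a_u+a_n+\tfrac{i+j}{2}$, these are exactly the entries with $i+j$ even, and as $i+j$ runs over the even numbers in $[2,2k]$ they form the consecutive block $[A+1,A+k]$ with $A=n-k+\sum_{u=k+1}^{n-1}a_u+a_n$. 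A short endpoint comparison shows this block meshes with $P$ and $Q$ precisely when $a_n\leq k-1$. For the converse I would exhibit an omitted integer whenever the bound fails: if $a_m\geq k$ for some $k+1\leq m\leq n-1$ then, writing $j=m-k+1$, the value $p_{k,j-1}+1$ lies strictly below $p_{1j}$, $r_{11}$ and $q_{11}$ and below everything in the remaining columns of $P$, so it is missing; while if $a_n\geq k$ then $p_{k,n-k}+1$ (one above the global maximum of $P$) falls into the unfilled gap below $A+1$ and below $q_{11}$. Together with the initialization $a_k=0$ and dominance $a_i\geq 0$ this gives the stated equivalence.

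The main obstacle is the careful bookkeeping of the $R$-block, and this is exactly where types $B$ and $C$ genuinely differ: the normalization $e=1$ turns $2ea_n$ into $2a_n$, so that $a_n$ obeys the same bound $k-1$ as the remaining coefficients, whereas in Corollary \ref{corBlessn} the value $e=\tfrac12$ gives $a_n$ the separate bound $2k-1$. One must verify both that the integer $R$-entries form an unbroken interval and that its endpoints align with the last $P$-entry and the first $Q$-entry, since a single misalignment produces an uncovered integer. Finally, the case $k=n$ is handled using the upper-triangular step matrix of Theorem \ref{thm=n} rather than the three-block form: there part~2 is vacuous because $\lambda=0$, and part~1 follows from the same consecutive-row-plus-diagonal chaining argument applied to $T_{n,\lambda}^C$.
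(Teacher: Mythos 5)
Your proposal is correct and follows essentially the same route as the paper: for part 1 the row-chaining of the $P$ and $Q$ blocks with $r_{ii}=p_{i,n-k}+1$ inside each row and $r_{i,i+1}=q_{i,n-k}+1$ plugging the width-two gaps at the extremal value $a_{k-i}=2n-2k+1$, and for part 2 the column-chaining plus the integer entries of $R$ forming the consecutive block ending the endpoint comparison at the threshold $a_n\le k-1$, with the same omitted-integer witnesses $p_{k,j-1}+1$ and $p_{k,n-k}+1$ for the converse. The one imprecision is your dispatch of $k=n$ in part 1: there $T^C_{n,\lambda}$ has no $P,Q$ blocks and its off-diagonal rows contain half-integers, so there are no ``consecutive rows'' to chain; the paper instead chains the diagonal $t_{ii}$ with the superdiagonal $t_{i-1,i}$ (an integer exactly when $a_{1+n-i}=1$), which is the same mechanism as your $R$-block bookkeeping and is an immediate fix.
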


\begin{proof}
1.
We first prove the case where $k<n$.
Since $a_u=0$ for
$u\geq k$, then we can write $T_{k,\lambda}^C=(P_{k,\lambda}^C,Q_{k,\lambda}^C,R_{k,\lambda}^C)$ as follows.

$$P_{k,\lambda}^C= \left(\begin{matrix}
 1& 2&\dots&  n-k\\
 \vdots&\vdots&\ddots&\vdots\\
\bm{\sum\limits_{u=1+k-i}^{k-1}a_u+i}&\sum\limits_{u=1+k-i}^{k-1}a_u+i+1&\dots&\sum\limits_{u=1+k-i}^{k-1}a_u+i+n-k-1\\
\vdots&\vdots&\ddots&\vdots\\
\sum\limits_{u=1}^{k-1}a_u+k&\sum\limits_{u=1}^{k-1}a_u+k+1&\dots&\sum\limits_{u=1}^{k-1}a_u+n-1\\
   \end{matrix}   \right),$$
where we emphasize $$p_{i1}=\sum\limits_{u=1+k-i}^{k-1}a_u+i;$$
   \begin{footnotesize}
    $$ Q_{k,\lambda}^C= \left(\begin{matrix}
 n-k+2& n-k+3&\dots& 2n-2k+1\\
 \vdots&\vdots&\ddots&\vdots\\
 \sum\limits_{u=2+k-i}^{k-1}a_u+n-k+i & \sum\limits_{u=2+k-i}^{k-1}a_u+n-k+i+1&\dots& \bm{\sum\limits_{u=2+k-i}^{k-1}a_u+2n-2k+i-1}\\
\vdots&\vdots&\ddots&\vdots\\
\sum\limits_{u=1}^{k-1}a_u+n+1 &\sum\limits_{u=1}^{k-1}a_u+n+2&\dots&\sum\limits_{u=1}^{k-1}a_u+2n-k\\
   \end{matrix}    \right),$$
   \end{footnotesize}
where we emphasize $$q_{i-1,n-k}=\sum\limits_{u=2+k-i}^{k-1}a_u+2n-2k+i-1;$$

$$r_{i,i}=\sum\limits_{u=1+k-i}^{k-1}a_u+n-k+i$$ and
$$r_{i-1,i}=\sum\limits_{u=2+k-i}^{k-1}a_u+\frac{a_{1+k-i}-1}{2}+n-k+i.$$
Notice that the entries of all rows of matrices $P_{k,\lambda}^C$ and $Q_{k,\lambda}^C$ are consecutive integers and $p_{i,n-k}+2=r_{i,i}+1=q_{i1}$ for fixed integer $i~(1\le i\le k)$.

As the proof of the first statement of Corollary \ref{corBlessn}, if $a_{1+k-i}\le 2n-2k~(2\le i\le k)$, then
 $$p_{i1}=\sum\limits_{u=1+k-i}^{k-1}a_u+i\le\sum\limits_{u=2+k-i}^{k-1}a_u+2n-2k+i=q_{i-1,n-k}+1.$$
It's obvious that any integer $l\in [p_{i-1,1},p_{i1}]$ appears as an entry of $P_{k,\lambda}^C$ or $Q_{k,\lambda}^C$.

 If $a_{1+k-i}=2n-2k+1$, then any integer $l\in [p_{i-1,1},p_{i1}-2=q_{i-1,n-k}]$ appears as an entry of $P_{k,\lambda}^C$ or $Q_{k,\lambda}^C$.  We also have
$$p_{i1}-1=q_{i-1,n-k}+1= \sum\limits_{u=2+k-i}^{k-1}a_u+\frac{2n-2k}{2}+n-k+i=r_{i-1,i}.$$
It follows that any integer $l\in [p_{i-1,1},p_{i1}]$ appears as an entry of $T_{k,\lambda}^C$ as long as $a_{1+k-i}\leq 2n-2k+1$.

For $k=n$, from the definition of $T_{n,\lambda}^C$ (See Definition \ref{defequaln}), we have
$$t_{i,i}=\sum\limits_{u=1+n-i}^{n}a_u+i$$ and
$$t_{i-1,i}=\sum\limits_{u=2+n-i}^{n}a_u+\frac{a_{1+n-i}-1}{2}+i.$$
If $a_{1+n-i}=0,$ then $t_{i,i}-t_{i-1,i-1}=1$.
If $a_{1+n-i}=1,$ then $t_{i,i}=t_{i-1,i}+1=t_{i-1,i-1}+2$. If we consider $\{t_{ii}\}$ and $\{t_{i-1,i}\}$, then it is easy to see that any integer $l\in [1,M_{n,\lambda}^C=t_{nn}]$ appears as an entry of $T_{n,\lambda}^C$.

Hence if $0\leq a_{1+k-i}\leq 2n-2k+1$ for all $2\leq i\leq k$, then any integer $l\in [1,M_{k,\lambda}^C]$ appears as an entry of $T_{k,\lambda}^C$.

2. Since $a_{u}=0~(u\leq k)$, $T_{k,\lambda}^C=(P_{k,\lambda}^C,Q_{k,\lambda}^C,R_{k,\lambda}^C)$ where
$$P_{k,\lambda}^C= \left(\begin{matrix}
 1& \dots &\bm{\sum\limits_{u=k+1}^{k-1+j}a_u+j}&\dots& \sum\limits_{u=k+1}^{n-1}a_u+n-k\\
 2&\dots & \sum\limits_{u=k+1}^{k-1+j}a_u+j+1&\dots&\sum\limits_{u=k+1}^{n-1}a_u+n-k+1\\
\vdots&\vdots &\vdots&\ddots&\vdots\\
 k& \dots &\bm{\sum\limits_{u=k+1}^{k-1+j}a_u+j+k-1}&\dots&\sum\limits_{u=k+1}^{n-1}a_u+n-1\\
   \end{matrix}   \right), $$

\begin{normalsize}
    $$   Q_{k,\lambda}^C=\left(\begin{smallmatrix}
\sum\limits_{u=k+1}^{n-1}a_u+2a_n+n-k+2&\dots&\bm{\sum\limits_{u=k+1}^{n}a_u+\sum\limits_{u=n+1-j}^{n}a_u+n-k+j+1}&\dots&2\sum\limits_{u=k+1}^{n}a_u+2n-2k+1\\
\sum\limits_{u=k+1}^{n-1}a_u+2a_n+n-k+3&\dots&\sum\limits_{u=k+1}^{n}a_u+\sum\limits_{u=n+1-j}^{n}a_u+n-k+j+2&\dots&2\sum\limits_{u=k+1}^{n}a_u+2n-2k+3\\
\vdots&\dots&\vdots&\ddots&\vdots\\
\sum\limits_{u=k+1}^{n-1}a_u+2a_n+n+1 &\dots&\bm{\sum\limits_{u=k+1}^{n}a_u+\sum\limits_{u=n+1-j}^{n}a_u+n+j}&\dots&2\sum\limits_{u=k+1}^{n}a_u+2n-k\\
   \end{smallmatrix}    \right)$$

\end{normalsize}

and
$$ r_{ii}= \sum\limits_{u=k+1}^{n}a_u+n-k+i.$$

 $(\impliedby)$ Similar to the proof of the second statement in Corollary \ref{corBlessn},  if $0\leq a_i\leq k-1$ for all $k+1\leq i\leq n-1$, then for any integer $l$ in $[1,p_{k,n-k}]$ and $[q_{11},q_{k,n-k}]$ appears in $T_{k,\lambda}^C$. Furthermore, if $0\leq a_n\leq k-1$, then $r_{11}\leq p_{k,n-k}+1$ and $q_{11}\leq r_{kk}+1$. It is easy to see that the diagonal of $R_{k,\lambda}^C$ are consecutive integers.
Hence we show that any integer $l\in[1,q_{k,n-k}=M_{k,\lambda}^C]$ appears in $T_{k,\lambda}^C.$

$(\implies)$ If for some integer $j~(2\le j<n-k+1)$, $a_{k-1+j}>k-1$, then
$$p_{k,j-1}+1=\sum\limits_{u=k+1}^{k-2+j}a_u+k-1+j<p_{1j}=\sum\limits_{u=k+1}^{k-1+j}a_u+j<r_{11}<q_{11},$$

Hence $p_{k,j-1}+1$ would not appear as an entry of $T_{k,\lambda}^C$. Similarly, if $a_{n}>k-1$,
then $$p_{k,n-k}+1=\sum\limits_{u=k+1}^{n-1}a_u+n<r_{11}=\sum\limits_{u=k+1}^{n}a_u+n-k+1<q_{11}.$$
So $p_{k,n-k}+1$ would not appear as an entry of $T_{k,\lambda}^C$.
\end{proof}

For type $D_n/P(\alpha_k)$, we have the following two corollaries.

\begin{cor}\label{cor<nD}
Let $E_\lambda$ be an initialized irreducible homogeneous bundle over $D_n/P(\alpha_k)$ for $1<k<n-1$.
\begin{enumerate}
\item[1.] Suppose $\lambda=a_1\lambda_1+\dots+a_{k-1}\lambda_{k-1}.$ If $0\leq a_{1+k-i}\leq 2n-2k-1$ for all $2\leq i\leq k$, then $E_\lambda$ is an ACM bundle.
\item[2. ]Suppose $\lambda=a_{k+1}\lambda_1+\dots+a_{n}\lambda_{n}.$ $E_\lambda$ is an ACM bundle if and only if $0\leq a_i\leq k-1$ for all $k+1\leq i\leq n-2$, and

$(a)$ $ \left\{\begin{matrix}0\leq a_{n-1}\leq k-1,\\
0\leq a_n-a_{n-1}\leq 2k-1,
\end{matrix}\right.$   \ \ or   \ \  $(b)$ $ \left\{\begin{matrix}0\leq a_{n}\leq k-1,\\
0\leq a_{n-1}-a_{n}\leq 2k-1.
\end{matrix}\right.$
\end{enumerate}

\end{cor}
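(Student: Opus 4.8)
The plan is to reduce both statements to the combinatorial criterion of Theorem \ref{thm<n}: since $E_\lambda$ is initialized, it is an ACM bundle if and only if every integer of $[1, M_{k,\lambda}^D]$ occurs as an entry of the step matrix $T_{k,\lambda}^D=(P_{k,\lambda}^D, Q_{k,\lambda}^D, R_{k,\lambda}^D)$. Thus each statement becomes a ``no-gap'' analysis of these entries, to be carried out exactly as in the proofs of Corollaries \ref{corBlessn} and \ref{cor<=nC} but with the type $D$ formulas, where the half-integer shifts and the strictly upper-triangular shape of $R_{k,\lambda}^D$ (the diagonal vanishes, reflecting the absence of the roots $2e_p$ in $\Phi^+_D$) must be tracked.

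For statement 1, I would substitute $a_u=0$ for $u\geq k$ into the three blocks. The entries along each row of $P_{k,\lambda}^D$ and of $Q_{k,\lambda}^D$ are then consecutive integers, so the only possible gaps lie between row $i-1$ of $Q$ and row $i$ of $P$. A direct computation gives $p_{i1}-q_{i-1,n-k}=a_{1+k-i}-(2n-2k-3)$, so the hypothesis $0\le a_{1+k-i}\le 2n-2k-1$ forces this difference to be at most $2$; when it equals $2$ (the extreme case $a_{1+k-i}=2n-2k-1$) the unique missing integer $p_{i1}-1$ is exactly the entry $r_{i-1,i}$ of $R_{k,\lambda}^D$. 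Running $i$ from $2$ to $k$ and using $M_{k,\lambda}^D=q_{k,n-k}$ then shows that all of $[1,M_{k,\lambda}^D]$ is covered. This mirrors the argument for Corollary \ref{corBlessn}.1.

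For statement 2, I would set $a_u=0$ for $u\leq k$ and write out the resulting blocks; now it is the columns of $P_{k,\lambda}^D$ and of $Q_{k,\lambda}^D$ that are consecutive runs. The bound $0\le a_i\le k-1$ for $k+1\le i\le n-2$ guarantees that successive columns overlap, so $[1,p_{k,n-k}]$ and $[q_{11},M_{k,\lambda}^D]$ are each completely covered, just as in Corollary \ref{corBlessn}.2. For the ($\impliedby$) direction it remains to fill the middle interval $[p_{k,n-k},q_{11}]$ by entries of $R_{k,\lambda}^D$, and this is where the two branches (a) and (b) enter: since the last coordinate of $\lambda+\rho-t\lambda_k$ is $\frac{1}{2}(a_n-a_{n-1})$, its sign decides which of the two half-spin weights $\lambda_{n-1},\lambda_n$ controls the top entries and how $a_{n-1},a_n$ enter the interpolating $r_{ij}$. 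Branch (a) is the case $a_n\geq a_{n-1}$ and branch (b) the case $a_{n-1}\geq a_n$; in each I would verify that the stated bound on the relevant difference is precisely what makes the relevant entries of $R_{k,\lambda}^D$ step through the middle interval without a gap. For the ($\implies$) direction I would show, as in the $(\implies)$ parts of Corollaries \ref{corBlessn}.2 and \ref{cor<=nC}.2, that violating any one of the bounds produces an integer wedged strictly between an entry of $P$ and the next attainable entry (below $r_{11}<q_{11}$) that no block can realize.

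The main obstacle is the asymmetry between $a_{n-1}$ and $a_n$ intrinsic to type $D$: because $\lambda_{n-1}^D$ and $\lambda_n^D$ differ only in the sign of their $e_n$-coordinate, the sign of $a_n-a_{n-1}$ swaps the roles of these two weights, which is exactly what forces the criterion to split into the two branches. Identifying, in each branch, which $r_{ij}$ interpolate $[p_{k,n-k},q_{11}]$ and verifying in the converse direction that exceeding the bound on $a_n-a_{n-1}$ (respectively $a_{n-1}-a_n$) leaves a genuinely uncoverable gap is the delicate point; the rest is the routine consecutive-integer bookkeeping inherited from the type $B$ and $C$ proofs.
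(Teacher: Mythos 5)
Your plan follows the paper's proof essentially step for step: both parts reduce to the no-gap criterion of Theorem \ref{thm<n}; part 1 is handled by the same row-overlap computation, with $r_{i-1,i}$ plugging the single missing integer $p_{i1}-1=q_{i-1,n-k}+1$ in the extreme case $a_{1+k-i}=2n-2k-1$; and part 2 by covering $[1,p_{k,n-k}]$ and $[q_{11},M_{k,\lambda}^{D}]$ via column overlaps of $P_{k,\lambda}^{D}$ and $Q_{k,\lambda}^{D}$ and bridging the middle interval with entries of $R_{k,\lambda}^{D}$, exactly as the paper does (which likewise reduces branch (b) to branch (a) by the symmetry swapping $a_{n-1}$ and $a_n$). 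The one ingredient you defer that the paper makes explicit is the further case split, inside branch (a), on the parity of $a_n-a_{n-1}$: since the entries of $R_{k,\lambda}^{D}$ are half-integers, it is the diagonal $\{r_{i-1,i}\}$ (odd case) or $\{r_{i-2,i}\}$ (even case) that consists of consecutive integers bridging $[p_{k,n-k},q_{11}]$, with the small edge case $k=2$ in the even case handled by $q_{11}\leq p_{2,n-2}+1$ directly; aside from leaving that verification open, your computations agree with the paper's.
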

\begin{proof}
1. Since $a_u=0$ for
$u\geq k$, then we can write $T_{k,\lambda}^C=(P_{k,\lambda}^C,Q_{k,\lambda}^C,R_{k,\lambda}^C)$ as follows.
$$P_{k,\lambda}^D= \left(\begin{matrix}
 1& 2&\dots&  n-k\\
 \vdots&\vdots&\ddots&\vdots\\
\bm{\sum\limits_{u=1+k-i}^{k-1}a_u+i}&\sum\limits_{u=1+k-i}^{k-1}a_u+i+1&\dots&\sum\limits_{u=1+k-i}^{k-1}a_u+i+n-k-1\\
\vdots&\vdots&\ddots&\vdots\\
\sum\limits_{u=1}^{k-1}a_u+k&\sum\limits_{u=1}^{k-1}a_u+k+1&\dots&\sum\limits_{u=1}^{k-1}a_u+n-1\\
   \end{matrix}   \right),$$
where we emphasize $$p_{i1}=\sum\limits_{u=1+k-i}^{k-1}a_u+i;$$
  \begin{footnotesize}
    $$ Q_{k,\lambda}^D=\left(\begin{matrix}
n-k& n-k+1&\dots& 2n-2k-1\\
\vdots&\vdots&\ddots&\vdots\\
\sum\limits_{u=2+k-i}^{k-1}a_u+n-k+i-2& \sum\limits_{u=2+k-i}^{k-1}a_u+n-k+i-1&\dots&\bm{ \sum\limits_{u=2+k-i}^{k-1}a_u+2n-2k+i-3}\\
\vdots&\vdots&\ddots&\vdots\\
\sum\limits_{u=1}^{k-1}a_u+a_n+n-1 &\sum\limits_{u=1}^{k-1}a_u+n&\dots&\sum\limits_{u=1}^{k-1}a_u+2n-k-2\\
   \end{matrix}    \right)$$
   \end{footnotesize}
where we emphasize $$q_{i-1,n-k}=\sum\limits_{u=2+k-i}^{k-1}a_u+2n-2k+i-3$$

and
\[r_{i-1,i}=\sum\limits_{u=2+k-i}^{k-1}a_u+\frac{a_{1+k-i}-3}{2}+n-k+i.\]

Notice first that the entries of all rows of matrices $P_{k,\lambda}^D$ and $Q_{k,\lambda}^D$ are consecutive integers and $p_{i,n-k} =q_{i,1}$ for fixed integer $i~(1\le i\le k)$.

As the proof of the first statement of Corollary \ref{corBlessn}, if $a_{1+k-i}\le 2n-2k-2~(2\le i\le k)$, then
 $$p_{i1}=\sum\limits_{u=1+k-i}^{k-1}a_u+i\le\sum\limits_{u=2+k-i}^{k-1}a_u+2n-2k+i-2=q_{i-1,n-k}+1.$$
It's obvious that any integer $l\in [p_{i-1,1},p_{i1}]$ appears as an entry of $P_{k,\lambda}^D$ or $Q_{k,\lambda}^D$.

 If $a_{1+k-i}=2n-2k-1$, then any integer $l\in [p_{i-1,1},p_{i1}-2=q_{i-1,n-k}]$ appears as an entry of $P_{k,\lambda}^D$ or $Q_{k,\lambda}^D$. We also have
$$p_{i1}-1=q_{i-1,n-k}+1= \sum\limits_{u=2+k-i}^{k-1}a_u+\frac{2n-2k-4}{2}+n-k+i=r_{i-1,i}.$$
It follows that any integer $l\in [p_{i-1,1},p_{i1}]$ appears as an entry of $T_{k,\lambda}^D$ as long as $a_{1+k-i}\leq 2n-2k-1$.

To sum up, if $0\leq a_{1+k-i}\leq 2n-2k-1$ for all $2\leq i\leq k$, then any integer $l\in [1,p_{k1}]$ appears as an entry of $T_{k,\lambda}^D$. Since $M_{k,\lambda}^D=q_{k,n-k}$, all integer values between 1 and $M_{k,\lambda}^D$ appear as entries of $T_{k,\lambda}^D$.

2. Since $a_u=0~(u\leq k)$,
$T_{k,\lambda}^D=(P_{k,\lambda}^D,Q_{k,\lambda}^D,R_{k,\lambda}^D)$, where
$$P_{k,\lambda}^D= \left(\begin{matrix}
 1& \dots &\bm{\sum\limits_{u=k+1}^{k-1+j}a_u+j}&\dots& \sum\limits_{u=k+1}^{n-1}a_u+n-k\\
 2&\dots & \sum\limits_{u=k+1}^{k-1+j}a_u+j+1&\dots&\sum\limits_{u=k+1}^{n-1}a_u+n-k+1\\
\vdots&\vdots &\vdots&\ddots&\vdots\\
 k& \dots &\bm{\sum\limits_{u=k+1}^{k-1+j}a_u+j+k-1}&\dots&\sum\limits_{u=k+1}^{n-1}a_u+n-1\\
   \end{matrix}   \right), $$

    $$ Q_{k,\lambda}^D=\left(\begin{smallmatrix}
\sum\limits_{u=k+1}^{n-2}a_u+a_n+n-k&\dots&\bm{\sum\limits_{u=k+1}^{n-2}a_u+\sum\limits_{u=n+1-j}^{n}a_u+n-k+j-1}&\dots&\sum\limits_{u=k+1}^{n}a_u+\sum\limits_{u=k+1}^{n-2}a_u+2n-2k-1\\
\sum\limits_{u=k+1}^{n-2}a_u+a_n+n-k+1&\dots& \sum\limits_{u=k+1}^{n-2}a_u+\sum\limits_{u=n+1-j}^{n}a_u+n-k+j&\dots&\sum\limits_{u=k+1}^{n}a_u+\sum\limits_{u=k+1}^{n-2}a_u +2n-2k\\
\vdots&\dots&\vdots&\ddots&\vdots\\
\sum\limits_{u=k+1}^{n-2}a_u+a_n+n-1 &\dots&\bm{\sum\limits_{u=k+1}^{n-2}a_u+\sum\limits_{u=n+1-j}^{n}a_u+n+j-2}&\dots&\sum\limits_{u=k+1}^{n}a_u+\sum\limits_{u=k+1}^{n-2}a_u+2n-k-2\\
   \end{smallmatrix}    \right),$$

\[r_{i-1,i}=n-k+i-1+\sum\limits_{u=1+k}^{n-1}a_u+ \frac{1}{2}(a_{n}-a_{n-1}-1)\] and

\[r_{i-2,i}=n-k+i-2+\sum\limits_{u=1+k}^{n-1}a_u+ \frac{1}{2}( a_{n}-a_{n-1}).\]

$(\impliedby)$ Without loss of generality, we only need to consider case (a).
Similar to the proof of the second statement of Corollary \ref{corBlessn}, if $0\leq a_i\leq k-1$ for all $k+1\leq i\leq n-1$, then for any integer $l$ in $[1,p_{k,n-k}]$ and $[q_{11},q_{k,n-k}]$ appears in $T_{k,\lambda}^D$. By assumption, $0\leq a_n-a_{n-1}\leq 2k-1$. We divide into the following two cases.

If $a_{n}-a_{n-1}$ is even, then $\{r_{i-2,i}\}$ are consecutive integers.
In this case, $a_n-a_{n-1}\leq 2k-2$. If $k=2$, then \[q_{11} =\sum\limits_{u=3}^{n-2}a_u+a_n+n-2=\leq \sum\limits_{u=3}^{n-1}a_u+n=p_{2,n-2}+1.\]
If $k>2$, then we have \[r_{13}=n-k+1+\sum\limits_{u=1+k}^{n-1}a_u+ \frac{1}{2}( a_{n}-a_{n-1})\leq\sum\limits_{u=k+1}^{n-1}a_u+n=p_{k,n-k}+1 \]
and \[q_{11}=\sum\limits_{u=k+1}^{n-2}a_u+a_n+n-k\leq n-1+\sum\limits_{u=1+k}^{n-1}a_u+\frac{1}{2}(a_{n}-a_{n-1})=r_{k-2,k}+1.\]

If $a_{n}-a_{n-1}$ is odd, then $\{r_{i-1,i}\}$ are consecutive integers.
Since $a_n-a_{n-1}\leq 2k-1$, we have \[r_{12}=n-k+1+\sum\limits_{u=1+k}^{n-1}a_u+ \frac{1}{2}( a_{n}-a_{n-1}-1)\leq\sum\limits_{u=k+1}^{n-1}a_u+n=p_{k,n-k}+1 \]
and \[q_{11}=\sum\limits_{u=k+1}^{n-2}a_u+a_n+n-k\leq n+\sum\limits_{u=1+k}^{n-1}a_u+\frac{1}{2}(a_{n}-a_{n-1}-1)=r_{k-1,k}+1.\]
So it is easy to show that any integer $l\in[1,M_{\lambda,k}^D=q_{k,n-k}]$ appears as an entry of $T_{\lambda,k}^D$ in the above two cases.

$(\implies)$ For case $(a)$, if for some integer $j~(2\le j< n-k+1)$, $a_{k-1+j}>k-1$, then
$p_{k,j-1}+1<p_{1j}<q_{11}$ and

\begin{equation*}
\begin{aligned}
r_{12}-(p_{k,j-1}+1)& =n-k+1+\sum\limits_{u=1+k}^{n-2}a_u+ \frac{1}{2}( a_{n}+a_{n-1}-1) -(\sum\limits_{u=k+1}^{k-2+j}a_u+j+k-1)\\
         &=n-2k-j+\frac{3}{2}+\sum\limits_{u=k+j}^{n-2}a_u+a_{k-1+j}+\frac{1}{2}(a_{n-1}+a_n)\\
         &>n-k-j+\frac{1}{2}+\sum\limits_{u=k+j}^{n-2}a_u+\frac{1}{2}(a_{n-1}+a_n)>0.
\end{aligned}
\end{equation*}

Hence $p_{k,j-1}+1$ would not appear as an entry of $T_{k,\lambda}^D$. If $a_{n}-a_{n-1}>2k-1,$ then $$r_{12}-(p_{k,n-k}+1)=\frac{1}{2}(-a_{n-1}+a_{n}+1-2k)>0,$$
i.e., $p_{k,n-k}+1<r_{12}<q_{11}.$

Hence $p_{k,n-k}+1$ would not appear as an entry of $T_{k,\lambda}^D$. Similar to prove case $(b)$.

\end{proof}
\begin{cor}\label{cor=nD}
Let $E_\lambda$ be an initialized irreducible homogeneous bundle over $D_n/P(\alpha_n)$ with $\lambda=a_1\lambda_1+\dots+a_{n-1}\lambda_{n-1}$.

 1. If $0\leq a_i\leq 1$ for all $3\leq i\leq n-3$ and $a_1=a_2=a_{n-1}=a_{n-2}=0$, then $E_\lambda$ is an ACM bundle.

  2. If $a_i=0$ for all $2\leq i\leq n-2$ and $0\leq a_1,a_{n-1}\leq n-4$, then $E_\lambda$ is an ACM bundle.
\end{cor}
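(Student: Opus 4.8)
The plan is to reduce everything to Theorem \ref{thm=n}, which says that the initialized $E_\lambda$ over $D_n/P(\alpha_n)$ is ACM if and only if every integer in $[1,M_{n,\lambda}^D]$ occurs among the entries of the strictly upper-triangular step matrix $T_{n,\lambda}^D$ of Definition \ref{defequaln}. Since $E_\lambda$ is initialized, Lemma \ref{initial} gives $a_n=0$, so the entries become $t_{ij}=\sum_{u=n-i+1}^{n-1}a_u+\sum_{v=n-j+1}^{n-2}a_v+i+j-2$ for $i<j$. The two facts I would record at the outset are the horizontal and vertical increments $t_{i,j+1}-t_{ij}=a_{n-j}+1$ and $t_{i+1,j}-t_{ij}=a_{n-i}+1$, both $\ge 1$; under the hypotheses these increments equal $1$ or $2$, which is exactly what forces consecutive runs of integers and drives both proofs.

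For Part 1 all boundary coefficients $a_1,a_2,a_{n-2},a_{n-1}$ vanish and $a_i\in\{0,1\}$ for $3\le i\le n-3$, so I would imitate Corollary \ref{corequalnB} and read off the first two rows together with the last two columns. Because $a_{n-1}=0$ gives $t_{2j}=t_{1j}+1$ while $a_{n-j}\in\{0,1\}$ gives $t_{1,j+1}=t_{2j}+a_{n-j}\in\{t_{2j},t_{2j}+1\}$, the entries of rows $1$ and $2$ form one block of consecutive integers from $t_{12}=1$ up to $t_{2n}=S+n$, where $S:=\sum_{u=3}^{n-3}a_u$. Symmetrically $a_1=0$ gives $t_{i,n}=t_{i,n-1}+1$ and $a_{n-i}\in\{0,1\}$ gives $t_{i+1,n-1}=t_{i,n}+a_{n-i}$, so the last two columns form a consecutive block from $t_{1,n-1}=S+n-2$ up to $t_{n-1,n}=M_{n,\lambda}^D$. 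The closing step is the arithmetic check that these two blocks overlap, namely $t_{1,n-1}=S+n-2\le S+n=t_{2n}$, whence their union is all of $[1,M_{n,\lambda}^D]$.

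For Part 2 only $a_1$ and $a_{n-1}$ may be nonzero, so the matrix degenerates: writing $b:=a_{n-1}$ and $c:=a_1$ one obtains $t_{1j}=j-1$ for $j\le n-1$, $t_{1n}=c+n-1$, $t_{ij}=b+i+j-2$ for $2\le i<j\le n-1$, and $t_{in}=b+c+i+n-2$ for $i\ge 2$. I would then isolate three consecutive blocks of values: the first row gives $\{1,\dots,n-2\}$; the inner part $\{t_{ij}:2\le i<j\le n-1\}$ gives $\{b+3,\dots,b+2n-5\}$ (here one checks that $i+j$ attains every integer from $5$ to $2n-3$); and the last column gives $\{b+c+n,\dots,b+c+2n-3\}$, whose top is exactly $M_{n,\lambda}^D=b+c+2n-3$. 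The hypotheses are precisely the gluing conditions: $b\le n-4$ yields $b+3\le n-1$ so the first two blocks chain without a gap, and $c\le n-4$ yields $b+c+n\le b+2n-4$ so the last block chains onto them; thus every integer in $[1,M_{n,\lambda}^D]$ appears.

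The mechanical parts—expanding the step matrix under each hypothesis and verifying that each designated block is an unbroken run of integers—are routine. The point that genuinely needs care, and which I expect to be the main obstacle, is pinning down the precise endpoints ($t_{2n}$, $t_{1,n-1}$, $t_{n-1,n}$ in Part 1, and the bounds of the three blocks in Part 2) and then matching the overlap or adjacency inequalities to the stated bounds $a_i\le 1$, respectively $a_1,a_{n-1}\le n-4$; it is exactly the tightness of these inequalities (no off-by-one slack) that shows where the hypotheses are used and where a single larger coefficient would break the coverage.
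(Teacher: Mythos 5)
Your proposal is correct and follows essentially the same route as the paper: both reduce to Theorem \ref{thm=n}, specialize the step matrix $T_{n,\lambda}^D$ under each hypothesis, and verify coverage of $[1,M_{n,\lambda}^D]$ via the first two rows and last two columns in Part 1 (using $t_{1,j+1}=t_{2j}+a_{n-j}$ and $t_{i+1,n-1}=t_{i,n}+a_{n-i}$) and via the three blocks $\{1,\dots,n-2\}$, $\{a_{n-1}+3,\dots,a_{n-1}+2n-5\}$, $\{a_1+a_{n-1}+n,\dots,a_1+a_{n-1}+2n-3\}$ in Part 2. Your endpoint computations and gluing inequalities match the paper's; you are merely slightly more explicit about the overlap checks.
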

\begin{proof}
1. If $a_1=a_2=a_{n-1}=a_{n-2}=0$, then
   $$ T_{n,\lambda}^D=\left(\begin{matrix}
0&1& 2 &a_{n-3}+3& \dots& \sum\limits_{u=3}^{n-3}a_u+n-2 & \sum\limits_{u=3}^{n-3}a_u+n-1 \\
0&0& 3 &a_{n-3}+4&\dots&\sum\limits_{u=3}^{n-3}a_u+n-1& \sum\limits_{u=3}^{n-3}a_u+n \\
  \vdots&\vdots&\ddots&\ddots&\dots&\vdots&\vdots\\  \vdots&\vdots&\vdots&\ddots&\ddots&\vdots&\vdots\\
   0&0&0&0&\ddots&   2\sum\limits_{u=3}^{n-3}a_u+2n-5& 2\sum\limits_{u=3}^{n-3}a_u+2n-4\\
 0&0&0&0&\dots&0&  2\sum\limits_{u=3}^{n-3}a_u+2n-3\\
 0&0&0&0&\dots&0& 0\\
   \end{matrix}   \right). $$

   As the proof in Corollary \ref{corequalnB}, $t_{1,j+1}=t_{2,j}+a_{n-j}~(j\geq3)$. If $0\leq a_{n-j}\leq1~(3\leq j\leq n-3),$ then $t_{1,j+1}=t_{2j}$ or $t_{1,j+1}=t_{2j}+1.$ Meanwhile, $t_{i+1,n-1}=t_{i,n}+a_{n-i}~(i\leq n-3).$ If $0\leq a_{n-i}\leq 1~(3\leq i\leq n-3)$, then $t_{i+1,n-1}=t_{i,n}$ or $t_{i+1,n-1}=t_{i,n}+1.$ By considering the first two rows and the last two columns of the step matrix, any integer $l\in[1,2\sum\limits_{u=3}^{n-3}a_u+2n-3]$ lies in $T_{n,\lambda}^C$. Hence $E_\lambda$ is an ACM bundle.

 2.  Since $a_i=0~(2\leq i\leq n-2)$, we have
 $$ T_{n,\lambda}^D=\left(\begin{matrix}
0& 1&  2 & \dots&   n-2  &  a_1 +n-1 \\
0&0& a_{n-1}+3 &  \dots&  a_{n-1}+n-1& a_1+a_{n-1}+n \\
  \vdots&\vdots&\vdots&\ddots&\vdots&\vdots\\
 0&0&0&\dots&a_{n-1}+2n-5& a_1+a_{n-1}+2n-4\\
 0&0&0&\dots& 0& a_1+a_{n-1}+2n-3\\
 0&0&0&\dots& 0& 0\\
   \end{matrix}   \right). $$
   If $a_{n-1}\leq n-4$, then $a_{n-1}+3\leq n-1$, which means that any integer $l\in [1,a_{n-1}+2n-5]$ lies in $T^D_{n,\lambda}.$ If $a_{1}\leq n-4$, then $a_{1}+a_{n-1}+n\leq a_{n-1}+2n-4,$
   which means that any integer $l\in [1,a_1+a_{n-1}+2n-3]$ lies in $T^D_{n,\lambda}.$

\end{proof}
\bibliography{ref}

\end{document}